\newcommand{\al}{\alpha}       
\newcommand{\lda}{\lambda}
\newcommand{\om}{\Omega}            
\newcommand{\pa}{\partial}
\newcommand{\va}{\varepsilon}       
\newcommand{\ud}{\mathrm{d}}
\newcommand{\be}{\begin{equation}} 
\newcommand{\ee}{\end{equation}}
\newcommand{\Lda}{\Lambda}    
\newcommand{\A}{\mathbf{A}}
\newcommand{\cA}{\mathcal{A}}
\newcommand{\B}{\mathbf{B}}
\newcommand{\cB}{\mathcal{B}}
\newcommand{\I}{\mathbf{I}}
\newcommand{\cL}{\mathcal{L}} 
\newcommand{\Z}{\mathbb{Z}}
\newcommand{\M}{\mathcal{M}}
\newcommand{\MM}{\mathbb{M}}
\newcommand{\m}{\mathbf{m}}
\newcommand{\cN}{\mathcal{N}}
\newcommand{\n}{\mathbf{n}}
\newcommand{\PP}{\mathbf{P}}
\newcommand{\Q}{\mathbf{Q}}  
\newcommand{\R}{\mathbb{R}}
\newcommand{\cS}{\mathcal{S}} 
\newcommand{\Ss}{\mathbb{S}}
\newcommand{\vv}{\mathbf{v}}
\newcommand{\wc}{\rightharpoonup}
\newcommand{\vp}{\varphi}
\newcommand{\T}{\mathrm{T}}
\newcommand{\ga}{\gamma}
\newcommand{\Ga}{\Gamma}
\newcommand{\sg}{\sigma} 
\newcommand{\ift}{\infty} 
\newcommand{\wt}{\widetilde}
\newcommand{\f}{\frac}
\newcommand{\ol}{\overline}
\newcommand{\op}{\operatorname}
\newcommand{\na}{\nabla}
\DeclareMathOperator{\dist}{dist}
\DeclareMathOperator{\Bad}{Bad}
\DeclareMathOperator{\supp}{supp}
\DeclareMathOperator{\tr}{tr}
\DeclareMathOperator{\sing}{sing}
\DeclareMathOperator{\loc}{loc}
\def\<{\langle}\def\>{\rangle}
\def\({\left(}\def\){\right)}
\numberwithin{equation}{section}
\theoremstyle{plain}
\newtheorem{thm}{Theorem}[section]
\newtheorem{cor}[thm]{Corollary}
\newtheorem{lem}[thm]{Lemma}
\newtheorem{prop}[thm]{Proposition}
\theoremstyle{definition}
\newtheorem{defn}[thm]{Definition}
\theoremstyle{remark}
\newtheorem{rem}[thm]{Remark}
\title[Uniform estimates of Landau-de Gennes minimizers]{Uniform estimates of Landau-de Gennes minimizers in the vanishing elasticity limit with line defects}
\author{Haotong Fu}
\address{School of Mathematical Sciences, Peking University, Beijing 100871, China}
\email{547434974@qq.com}
\author{Huaijie Wang}
\address{School of Mathematical Sciences, Peking University, Beijing 100871, China}
\email{huaijie\_wang@163.com}
\author{Wei Wang}
\address{School of Mathematical Sciences, Peking University, Beijing 100871, China}
\email{gjmtamag@gmail.com,\,\,2201110024@stu.pku.edu.cn}
\date{}
\begin{document}

\begin{abstract}
For the Landau-de Gennes functional modeling nematic liquid crystals in dimension three, we prove that, if the energy is bounded by $C(\log\f{1}{\va}+1)$, then the sequence of minimizers $\{\Q_\va\}_{\va\in(0,1)}$ is relatively compact in $W_{\loc}^{1,p}$ for every $1<p<2$. This extends the classical compactness theorem of Bourgain-Br\'{e}zis-Mironescu [\emph{Publ. Math., IH\'{E}S}, 99:1-115, 2004] for complex Ginzburg-Landau minimizers to the $\mathbb R\mathbf P^2$-valued Landau-de Gennes setting. Moreover, we obtain local bounds on the integral of the bulk energy potential that are uniform in $ \va $, improving the estimate that follows directly from the assumption.
\end{abstract}

\maketitle

\section{Introduction}

\subsection{Backgrounds and main results}

The defining characteristic of nematic liquid crystals is the alignment of rod-like molecules. Their centers of mass remain disordered and flow freely like in an isotropic fluid; the molecular axes tend to align along locally preferred directions. Several continuum theories describe this orientational order using different order parameters. Among them, the Landau-de Gennes theory stands out as a comprehensive and widely accepted framework for nematic liquid crystals. One can interpret the local configuration in the theory by $\Q$-tensors, that is, the elements of 
$$
\Ss_0:=\left\{\Q\in\mathbb{M}^{3\times 3}:\Q^{\T}=\Q,\,\,\tr\Q=0\right\}.
$$
It is a real linear space of dimension five, equipped with the scalar product $\A:\B=\A_{ij}\B_{ij}$ and the corresponding norm $ |\A|=(\A:\A)^{\f{1}{2}} $. Physically, if all three eigenvalues of $\Q$ coincide, i.e., $\Q=\mathbf O$, the system is in the isotropic phase. If exactly two eigenvalues are equal and nonzero, $\Q$ is uniaxial. A tensor with three distinct eigenvalues is biaxial, possessing the five-dimensional freedom of $\Ss_0$. The governing equation in this study is a simplified version of the stationary Landau-de Gennes equation (see, e.g., \cite{deG71,IXZ15,MZ10}), which reads
\be
-\va^2\Delta\Q-a\Q-b\Q^2+\f{b}{3}|\Q|^2\I+c|\Q|^2\Q=\mathbf{O},\label{EL}
\ee
where $ a,b\geq 0 $, $ c>0 $ are associated with the material, $\Q:\om\to\Ss_0$ is the configuration of the medium, and throughout this paper, $ \om\subset\R^n $ is a bounded domain with $ n=2,3 $. Indeed, \eqref{EL} corresponds to the Euler-Lagrange equation of the Landau-de Gennes energy functional
\be
E_{\va}(\Q,\om):=\int_{\om}e_{\va}(\Q)\ud x,\tag{LdG}\label{LdG}
\ee
with the energy density given by
$$
e_{\va}(\Q):=\f{1}{2}|\na\Q|^2+\f{1}{\va^2}f(\Q).
$$
The function $f$ is the bulk potential encoding transitions between isotropic and uniaxial states, defined by
$$
f(\Q)=k-\f{a}{2}\tr\Q^2-\f{b}{3}\tr\Q^3+\f{c}{4}(\tr \Q^2)^2,\quad\Q\in\Ss_0.
$$
Here, $ k $ is an additive constant such that $ \inf_{\Q\in\Ss_0}f(\Q)=0 $. 

The vacuum manifold is
$$
\mathcal{N}:=\left\{s_*\(\n\otimes\n-\f{1}{3}\I\):\n\in\mathbb{S}^2\right\}=f^{-1}(0),
$$
where
$$
s_*:=s_*(a,b,c)=\f{1}{4c}(b+\sqrt{b^2+24ac}).
$$
Note that $ \cN $ is diffeomorphic to the two dimensional projective space $ \R\PP^2=\Ss^2/\{-\n\sim\n\} $. Letting $\va\to 0^+$, the term $\f{1}{\va^2}f(\Q)$ in \eqref{LdG} forces the minimizers to take the value in the vacuum manifold. The limiting energy functional is
\be
E(\Q,\om):=\int_{\om}|\na\Q|^2\ud x,\quad\Q\in H^1(\om,\cN).\tag{Dir}\label{Dirichletenergy}
\ee

Fundamental harmonic map theory tells us that minimizers of the Dirichlet energy \eqref{Dirichletenergy} may exhibit singularities, such as the so-called hedgehog solution 
$$
\Q=s_*\left(\f{x}{|x|}\otimes\f{x}{|x|}-\f{1}{3}\mathbf I\right),
$$
which is uniaxial everywhere except for a singular point at the origin. Such singularities, also known as point defects, arise from topological obstructions in mapping the domain into the vacuum manifold, which has nontrivial homotopy groups. Within the Landau-de Gennes framework, we can not only rigorously describe point defects, but the theory also gives an interpretation for disclination line defects. Variational analysis typically characterizes defect structures in the asymptotic limit of the Landau-de Gennes functional.

The asymptotic behavior of \eqref{LdG} has been extensively studied through mathematical analysis. As mentioned previously, when $\va$ tends to zero, the Landau-de Gennes functional will enforce the uniaxial state with value in $ \cN $ and one can recover the Dirichlet energy \eqref{Dirichletenergy}. Such convergence, first studied in \cite{MZ10} and refined later in \cite{NZ13}, can be briefly summarized that under some nice assumptions of $ \om\subset\R^3 $ and the boundary condition of the global minimizing problem, 
\begin{align*}
\Q_\va&\to \Q_0\text{ strongly in }H^1(\om,\Ss_0),\\
\Q_{\va}&\to\Q_0\text{ strongly in }C_{\loc}^j(\om\backslash\sing(\Q_0),\Ss_0)\text{ for any }j\in\Z_+,
\end{align*}
up to a subsequence, where $ \Q_0\in H^1(\om,\cN) $ is a minimizer of \eqref{Dirichletenergy} and $ \sing(\Q_0) $ represents its singular sets. The major difficulty in studying the behavior of minimizers of \eqref{LdG} as $\va$ tends to zero lies in the existence of zones where defects $ \sing(\Q_0) $ emerge. Recently in \cite{FWW25}, by quantitatively analyzing the size of ``bad points", we achieved the optimal $ L^p $ ($ 1<p<+\infty $) convergence for minimizers. The results above are under the assumption of uniformly bounded energy, namely, $E_{\va}(\Q_\va,\om)\le C$ for some $C>0$ independent of $\va$. In \cite{Can15,Can17}, Canevari considered the analysis to the case 
$$
E_\va(\Q_\va,\om)\le C\(\log\f{1}{\va}+1\),\quad\va\in(0,1),
$$
with two and three dimensions. In the three-dimensional case, the defects contain combinations of one-dimensional segments as well as locally isolated points. In particular, it is shown in \cite{Can17} that $\Q_\va\to \Q_0$ in $H^1_{\loc}$ outside the set of line defects. For sextic potential, Wang-Zhang considered similar problems and obtained parallel results in \cite{WZ24}.

A close analogy can be drawn between \eqref{LdG} and the Ginzburg-Landau functional for superconductivity, given by
\be
E_\va^{\op{GL}}(u,\om):=\int_\om \(\f{1}{2}|\nabla u|^2+\f{1}{4\va^2}(1-|u|^2)^2\)\ud x,\tag{GL}\label{GL}
\ee
where $u:\om\to\mathbb{C}$ is a complex-valued function. In the profound literature on Ginzburg–Landau theory, minimizers or critical points with energy bounded by $O(\log\f{1}{\va})$ are shown to converge to maps with defects (vortices) of co-dimension two. Notable works include Bethuel-Br{\'e}zis-H{\'e}lein \cite{BBH94}, Bethuel-Br{\'e}zis-Orlandi \cite{BBO01}, and Lin-Rivi{\`e}re \cite{LR99}. In the original proof of \cite{BBH94}, an essential ingredient in the argument is that when $ u_{\va} $ is a critical point of \eqref{GL},
\be
\f{1}{\va^2}\int_{\om}(1-|u_{\va}|^2)^2\leq C,\label{L1uniform}
\ee
where $ \om\subset\R^2 $ is star-shaped and $ C>0 $ is a constant independent of $ \va $. Later, it follows from arguments by Struwe \cite{Str94} that the star-shaped assumption is not necessary. In \cite{BOS05}, Bethuel-Orlandi-Smets established a local version of \eqref{L1uniform} in arbitrary dimensions.

In dimension three, Lin and Rivi{\`e}re \cite{LR01} showed that if $u_\va$ is a critical point satisfying the logarithmic energy bound and appropriate boundary conditions, then it enjoys $W^{1,p}$-regularity for any $p$ less than $\f{3}{2}$. The conclusion fails when $p>\f{3}{2}$. Subsequently, under the same energy bound, Bourgain-Brezis-Mironescu \cite{BBM04} obtained a refined result for minimizers of \eqref{GL}. Assuming that $ u_{\va}|_{\pa\om}\in H^{\f{1}{2}}(\pa\om,\Ss^1) $, with $ \pa\om $ smooth and simply connected, they proved global $ W^{1,p} $-compactness for all $ p\in[1,\f{3}{2}) $, and in addition, established local $W_{\loc}^{1,p}$-compactness for every $1<p<2$. 

Motivated by the results for the Ginzburg-Landau model, a natural question is whether analogous estimates like \eqref{L1uniform} and $ W^{1,p} $-compactness hold for minimizers or critical points of the Landau-de Gennes model \eqref{LdG}. In this paper, for local minimizers, we state our main theorem as follows.

\begin{thm}\label{Maintheorem}
$ \om\subset\R^3 $ is a bounded domain. Let $ \{\Q_{\va}\}_{\va\in(0,1)}\subset H^1(\om,\Ss_0) $ be local minimizers of \eqref{LdG}, that is, for any $ B_r(x)\subset\subset\om $ and $ \PP\in H^1(B_r(x),\Ss_0) $ with $ \Q_{\va}|_{\pa\om}=\PP|_{\pa B_r(x)} $ in the sense of traces, 
$$
E_{\va}(\Q_{\va},B_r(x))\leq E_{\va}(\PP,B_r(x)).
$$
Moreover, there is $ M>0 $ such that for any $ \va\in(0,1) $,
\be
E_{\va}(\Q_{\va},\om)\leq M\(\log\f{1}{\va}+1\)\quad\text{and}\quad\|\Q_{\va}\|_{L^{\ift}(\om)}\leq M.\label{conditionboundary}
\ee
Then, the following properties hold.
\begin{enumerate}[label=$(\theenumi)$]
\item\label{Bulkenergyestimate} For any $ K\subset\subset\om $,
\be
\int_{K}\f{1}{\va^2}f(\Q_{\va})\ud x\leq C,\label{Bulkenergyestimate1}
\ee
where $ C>0 $ depends only on $ a,b,c,K $, and $ M $.
\item\label{Lpbounds} For any $ K\subset\subset\om $ and $ p\in(1,2) $,
\be
\|\na\Q_{\va}\|_{L^p(K)}\leq C,\label{Lpbounds1}
\ee
where $ C>0 $ depends only on $ a,b,c,K $, and $ M $. In particular, $ \{\Q_{\va}\}_{\va\in(0,1)} $ is relatively compact in $ W_{\loc}^{1,p}(\om,\Ss_0) $.
\end{enumerate}
\end{thm}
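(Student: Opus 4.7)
The strategy is to import the Ginzburg-Landau techniques of Bethuel-Orlandi-Smets \cite{BOS05} and Bourgain-Br\'ezis-Mironescu \cite{BBM04} into the $\R\PP^2$-valued Landau-de Gennes setting. I would prove \ref{Bulkenergyestimate} first since it is the main ingredient for \ref{Lpbounds}. The common starting point is the Pohozaev identity obtained by testing \eqref{EL} against $(x-x_0)\cdot\na\Q_\va$ on $B_r(x_0)\subset\subset\om$, which in dimension three reads
$$
\f{1}{2}\int_{B_r(x_0)}|\na\Q_\va|^2\ud x+\f{3}{\va^2}\int_{B_r(x_0)}f(\Q_\va)\ud x+r\int_{\pa B_r(x_0)}|\pa_\nu\Q_\va|^2\ud S=r\int_{\pa B_r(x_0)}e_\va(\Q_\va)\ud S.
$$
Differentiating $r\mapsto E_\va(\Q_\va,B_r(x_0))/r$ and substituting this identity yields the monotonicity formula
$$
\f{\ud}{\ud r}\[\f{E_\va(\Q_\va,B_r(x_0))}{r}\]\geq\f{2}{r^2\va^2}\int_{B_r(x_0)}f(\Q_\va)\ud x.
$$

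For Part \ref{Bulkenergyestimate}, integrating this monotonicity between $r$ and $R=\dist(x_0,\pa\om)/2$ and applying \eqref{conditionboundary} produces an $O(|\log\va|)$ bound on the bulk energy, which is not yet the claim. To eliminate the logarithm, I would combine the monotonicity with a clearing-out alternative in the spirit of \cite{BOS05}: there exists a universal $\eta_0>0$ such that whenever $E_\va(\Q_\va,B_r(x_0))/r\leq\eta_0$, the minimality of $\Q_\va$ and a Luckhaus-type extension into a tubular neighborhood of $\cN$ inside $\Ss_0$ force the improved pointwise control $f(\Q_\va)/\va^2\leq C/r^2$ on $B_{r/2}(x_0)$. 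Covering $K$ at a dyadic sequence of scales via Vitali, the contribution of ``good'' balls (where the threshold holds) is uniformly controlled, while ``bad'' balls concentrate in a neighborhood of the defect set whose total volume is bounded through the monotonicity and the logarithmic hypothesis. A scale summation then yields \eqref{Bulkenergyestimate1}.

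For Part \ref{Lpbounds}, with the bulk bound in hand I would follow the BBM scheme. Fix a small radius $\rho$ and cover $K$ by balls $B_\rho(x_i)$; separate them into good balls (below the $\eta_0$ threshold) and bad balls. On good balls, clearing-out together with interior elliptic regularity gives uniform $W^{1,\ift}$-estimates. Bad balls cluster along the one-dimensional defect set whose $\HH^1$-measure is uniformly controlled; on them only the trivial pointwise bound $|\na\Q_\va|\leq C/\va$ is available, but a Hodge-decomposition duality argument -- using \eqref{Bulkenergyestimate1} to control the LdG analogue of the Jacobian $\Q_\va^*\w$ for a generator $\w$ of the de Rham cohomology of $\cN$ -- produces the desired estimate $\|\na\Q_\va\|_{L^p(K)}\leq C$ for every $p<2$. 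Relative compactness in $W^{1,p}_{\loc}$ then follows from Rellich-Kondrachov together with the $L^\ift$ hypothesis in \eqref{conditionboundary}.

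The principal obstacle lies in Part \ref{Bulkenergyestimate} and concerns the construction of comparison competitors in the non-orientable target $\cN\simeq\R\PP^2$. Because $\pi_1(\R\PP^2)=\Z/2$ is nontrivial, $\Q_\va$ encircling a line defect admits no single-valued lift to $\Ss^2$, and the Luckhaus-type replacement must therefore be carried out directly in a tubular neighborhood of $\cN$ in $\Ss_0$, carefully tracking the $\Z/2$-monodromy. This difficulty is absent in the simply-connected complex Ginzburg-Landau model and represents the central new technical issue beyond the program of \cite{BOS05,BBM04}.
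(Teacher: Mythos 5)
Your proposal correctly identifies the monotonicity formula and the Bethuel–Orlandi–Smets logarithmic bulk-radius trick as the two engines of Part~\ref{Bulkenergyestimate}, and these do match what the paper uses (see their Lemma on $r_x\in[\va^{1/4},\va^{1/8}]$ borrowed from \cite[Prop.~2.4]{BOS05}). However, there are three substantive gaps.

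First, for the covering of the bad set, a Vitali covering at a dyadic sequence of scales combined with ``volume of the neighborhood of the defect'' is not quantitative enough. The paper needs the sharp bound that, for every $\sigma>0$, the $r$-bad set $\Bad_{\op{II}}(\Q_\va;r,\Lda)\cap B_1$ can be covered by at most $C r^{-1-\sigma}$ balls of radius~$r$. This comes from a Cheeger--Naber-style quantitative stratification (their Lemma on covering via $j$-tuples $T^j(x)$, tracking how often the monotone quantity $\Theta^{\phi}$ drops by $\eta\log\f{1}{\va}$ along dyadic scales) combined with a blow-up/contradiction lemma establishing $1$-homogeneity and translation-invariance of the limiting measure, hence dimension reduction to a line. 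Without this quantitative covering the summation over scales in your Part~\ref{Bulkenergyestimate} argument does not close: the contribution of bad balls is $\va^2 \times (\text{number of balls})$, and you need the count to be strictly subquadratic in $r^{-1}$.

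Second, your Part~\ref{Lpbounds} argument via Hodge decomposition and the pullback of a generator $\w$ of the de~Rham cohomology of $\cN$ fails at the starting point: $\cN\simeq\R\PP^2$ has $\pi_1=\Z/2$, which is torsion, so $H^1_{\op{dR}}(\R\PP^2;\R)=0$ and there is no nontrivial degree-one form to pull back. The Jacobian/current machinery of \cite{BBM04,LR99} relies precisely on $H^1_{\op{dR}}(\Ss^1)\cong\R$ and has no analogue here; the paper flags exactly this obstruction in its discussion of difficulties. Instead, the paper uses the quantitative stratification to bound $\cL^3(\{r(\Q_\va,\cdot)<\eta'r\})$, which directly yields a weak-$L^p$ bound on $\na\Q_\va$ and, via a fractional Laplacian estimate (their Lemma on $(-\Delta)^{\al/2}(\vp\Q_\va)$), the needed fractional Sobolev regularity.

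Third, your final sentence — that relative compactness in $W^{1,p}_{\loc}$ follows from Rellich--Kondrachov plus the $L^\infty$ bound — is not correct. A uniform bound $\|\na\Q_\va\|_{L^p(K)}\leq C$ together with $\|\Q_\va\|_{L^\infty}\leq M$ gives only weak $W^{1,p}$ compactness and strong $L^q$ compactness, not strong convergence of the gradients. One needs strictly more regularity; the paper obtains it by proving a uniform bound in a fractional Sobolev space $W^{\al,q}$ with $\al>1$ (equivalently, $\na\Q_\va$ uniformly bounded in $W^{\al-1,q}$) and then invoking the compact embedding of fractional Sobolev spaces.
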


We now give some remarks on our results.

\begin{rem}
The estimates \eqref{Bulkenergyestimate1} and \eqref{Lpbounds1} are both sharp. In Section \ref{Sharpness}, we will give some discussions on the sharpness of these properties when the line defect occurs in the domain. 
\end{rem}

\begin{rem}
The condition \eqref{conditionboundary} can be satisfied for some global minimizers with suitable boundary conditions (see \cite[Proposition 3 $\&$ 4]{Can17} for references).
\end{rem}

\subsection{Difficulties and strategies} 

Compared to the setting of \cite{BBM04}, there are two main difficulties in dealing with the Landau-de Gennes model.
\begin{itemize}
\item In dimension three, the complex-valued Ginzburg-Landau model \eqref{GL} admits no point defect since the target manifold $ \Ss^1 $ is almost like $ \R $. In contrast, as shown by \cite[Section 7]{Can17}, the Landau-de Gennes model under logarithmic energy bound \eqref{conditionboundary} can simultaneously exhibit both point defects and disclination line defects. It brings obstacles in establishing the uniform regularity of minimizers. 

\item The second difficulty arises from the geometric structure of the vacuum manifolds. In the complex Ginzburg-Landau model, the target manifold $\Ss^1$ is co-dimension one in $\mathbb C$, which allows each critical point $u_\va$ to be written in the form
$
u_\va=|u_{\va}|\exp(i\vp_{\va})
$
with $u_\va\in\Ss^1$ precisely when $|u_\va|=1$. Such a simple polar representation plays a significant role in many classical works, including \cite{BBO01, BBM04, LR99, LR01}. For the case of the Landau-de Gennes model \eqref{LdG}, the vacuum manifold $ \cN $ is topologically equivalent to $ \R\PP^2 $, which has co-dimension three in $ \Ss_0 $. It complicates the analysis and renders previous techniques used in the Ginzburg–Landau setting inapplicable.
\end{itemize}

To address the two obscurities mentioned above, we adopt new ideas from geometric measure theory as well as recent improvements in the analysis of point defects. First, we introduce a new regular scale, different from that in \cite{FWW25}, to quantitatively characterize the formation of line defects while $ \va\to 0^+$. With the help of such a regular scale, we define the ``bad set" with respect to the line defects. Intuitively, it contains all points where the energy is large relative to a given constant. Next, using the monotonicity formula of minimizers repeatedly, we refer to the arguments by Cheeger and Naber \cite{CN13} to obtain the effective covering of the bad points. With this bound of volume for the neighborhood of the bad set, we are ready to prove estimates \eqref{Bulkenergyestimate1} and \eqref{Lpbounds1}. We outline our approaches as follows.
\begin{itemize}
\item To prove \eqref{Bulkenergyestimate1}, we adopt the strategy from Bethuel-Orlandi-Smets \cite{BOS05} to control the contribution of $ \f{1}{\va^2}f(\Q_{\va}) $ on bad set with a specific scale. On the complement of the bad region, existing results from \cite{FWW25} on point defects allow us to bound the integral of the bulk energy density effectively. Together, these two ingredients complete the proof.
\item The proof of \eqref{Lpbounds1} combines the bound estimate on the bad set with our previous analysis of point defects in \cite{FWW25}. Furthermore, we apply some notions and basic tools associated with the fractional Laplacian to obtain the relative compactness through the fractional Sobolev embedding results. Such arguments are different from those in \cite{BBM04}.
\end{itemize}

\subsection{Organization of this paper} In Section \ref{sec2}, we outline some primary tools in our proof. In Section \ref{sec3}, we introduce the concept of regular scales and establish the quantitative form of the clearing-out property of minimizers. In Section \ref{proofmaintheorem}, we apply key covering lemmas to control the bad set and combine the previous ingredients to prove our main theorem of this paper. In the final section, we provide the analysis of the optimality of our main results. 

\subsection{Notations and conventions} We use the following conventions in this paper.
\begin{itemize}
\item Throughout this paper, we denote positive constants by $ C $. To highlight dependence on parameters $ a_1,a_2,... $, we may write $ C(a_1,a_2,...) $, noting that its value may vary from line to line.
\item We will use the Einstein summation convention throughout this paper, summing the repeated index without the sum symbol.
\item For $ \n,\m\in\R^3 $, we let $ \n\otimes\m\in\MM^{3\times 3} $ with $ (\n\otimes\m)_{ij}=\n_i\m_j $.
\item Assume that $ \A,\B:\om\subset\R^3\to\mathbb{M}^{3\times 3} $ are two differentiable matrix valued functions. The gradient $ \A $ is $ \na\A:=(\pa_1\A,\pa_2\A,\pa_3\A) $. Furthermore, $
\na\A:\na \B:=\pa_k\A_{ij}\pa_k\B_{ij} $. In addition, $ |\na\A|^2=\na\A:\na\A $.
\item In this paper, $ B_r(x):=\{x\in\R^3:|y-x|<r\} $. We will drop $ x $, if it is the original point. To emphasize $ k $-dimensional balls, we use the notation $ B_r^k(x) $.
\item $ \I $: identity matrix of order $ 3 $. $ \mathbf{O} $: zero matrix of order $ 3 $.
\item Let $ i,j\in\{1,2,3\}^2 $. $ \delta_{ij}=1 $ if $ i=j $ and $ \delta_{ij}=0 $ if $ i\neq j $.
\item For $ A\subset\R^3 $, the $ r $-neighborhood of $ A $ is
$$
B_r(A):=\bigcup_{y\in A}B_r(y)=\{y\in\R^3:\dist(y,A)<r\}.
$$
\item For subset $ U\subset\R^3 $, define $ \M(U) $ as the collection of Radon measures on $ U $. We call $ \mu_i\wc^*\mu $ in $ \M(U) $ if for any $ f\in C_0(U) $, $$
\int_U f\ud\mu_i\to\int_Uf\ud\mu\quad\text{as}\quad i\to+\ift.
$$
\end{itemize}

\section{Preliminaries}\label{sec2}

First, we give the modified monotonicity formula. It applies in the proof of \cite{FWW25} and is also essential in the arguments of this paper.

\begin{defn}\label{defnofphi}
Let $ \phi\in C^{\ift}([0,+\ift),\R_{\geq 0}) $ such that the following properties hold.
\begin{enumerate}[label=$(\theenumi)$]
\item $ \supp\phi\subset[0,10) $, $ \phi(t)\geq 1 $ for any $ t\in[0,8] $, and $ \phi(0)=60 $.
\item For any $ t\in[0,+\ift) $, $ \phi(t)\geq 0 $ and $ |\phi'(t)|\leq 100 $.
\item $ -2\leq\phi'(t)\leq -1 $ for any $ t\in[0,8] $.
\item For any $ t\in\R_+ $, $ \phi'(t)\leq 0 $.
\end{enumerate}
\end{defn}

Let $ \Q\in H^1(\om,\Ss_0) $, $ x\in\om $, and $ 0<r<R<\f{1}{10}\dist(x,\pa\om) $. Define
$$
\Theta_r^{\phi}(\Q,x):=\f{1}{r}\int e_{\va}(\Q)\phi\(\f{|y-x|^2}{r^2}\)\ud y.
$$
We have the following modified monotonicity formula.

\begin{prop}[\cite{FWW25}, Proposition 2.2]\label{Monotone}
Assume that $ \Q_{\va}:\om\to\Ss_0 $ is a smooth solution of \eqref{EL}. Let $ x\in\om $ and $ 0<r<R<\f{1}{10}\dist(x,\pa\om) $. Then
\be
\begin{aligned}
&\Theta_R^{\phi}(\Q_{\va},x)-\Theta_r^{\phi}(\Q_{\va},x)\\
&=\int_r^R\left[-\f{2}{\rho^2}\int\left|\f{y-x}{\rho}\cdot\na\Q_{\va}\right|^2\phi'\(\f{|y-x|^2}{\rho^2}\)\ud y+\f{2}{\va^2\rho^2}\int f(\Q_{\va})\phi\(\f{|y-x|^2}{\rho^2}\)\ud y\right]\ud\rho.
\end{aligned}\label{Monotone1}
\ee
\end{prop}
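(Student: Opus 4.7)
The plan is to differentiate $\Theta_\rho^{\phi}(\Q_\va,x)$ directly in $\rho$ and then eliminate the resulting unsigned Dirichlet contribution by means of a Pohozaev-type identity derived from the Euler--Lagrange equation \eqref{EL}. By translation invariance I may assume $x=0$. Once a differential identity for $\f{d}{d\rho}\Theta_\rho^{\phi}$ is in hand, integrating from $r$ to $R$ yields \eqref{Monotone1}.

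The derivative computation is routine: differentiating under the integral sign, the $\rho^{-1}$ prefactor together with the chain rule applied to $\phi(|y|^2/\rho^2)$ produces
$$
\rho^2\,\f{d}{d\rho}\Theta_\rho^{\phi}(\Q_\va,0)=-\int e_\va(\Q_\va)\,\phi\!\(\f{|y|^2}{\rho^2}\)\ud y-2\int e_\va(\Q_\va)\,\f{|y|^2}{\rho^2}\phi'\!\(\f{|y|^2}{\rho^2}\)\ud y.
$$

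Next I would derive a Pohozaev identity by testing the rewritten Euler--Lagrange equation $-\Delta\Q_\va+\va^{-2}f'(\Q_\va)=\mathbf{O}$ against the radial multiplier $(y\cdot\na)\Q_\va$ weighted by $\phi(|y|^2/\rho^2)$ and integrating over $\R^3$. The bulk term collapses via the chain rule $f'(\Q_\va):\pa_k\Q_\va=\pa_k f(\Q_\va)$ to a total divergence which, after one integration by parts, contributes $-\va^{-2}\int f\bigl[3\phi+2(|y|^2/\rho^2)\phi'\bigr]\ud y$. The Dirichlet term needs two integrations by parts; after using the identity $\tfrac{1}{2}y_k\pa_k(|\na\Q_\va|^2)=\pa_j\Q_\va:\pa_j(y_k\pa_k\Q_\va)-|\na\Q_\va|^2$, one obtains the distinguished radial-gradient piece $2\int\bigl|\tfrac{y}{\rho}\cdot\na\Q_\va\bigr|^2\phi'\ud y$ together with cutoff-weighted Dirichlet contributions. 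Compact support of $\phi$ guarantees that no boundary terms arise.

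Substituting the Pohozaev identity into the expression for $\rho^2\,\f{d}{d\rho}\Theta_\rho^{\phi}$, the two $|\na\Q_\va|^2\cdot|y|^2/\rho^2\cdot\phi'$ contributions cancel, the two $\va^{-2}f\cdot|y|^2/\rho^2\cdot\phi'$ contributions cancel, and the leftover bulk contributions combine to $2\va^{-2}\int f\phi\ud y$. What remains is precisely
$$
\f{d}{d\rho}\Theta_\rho^{\phi}(\Q_\va,0)=-\f{2}{\rho^2}\int\left|\f{y}{\rho}\cdot\na\Q_\va\right|^2\phi'\!\(\f{|y|^2}{\rho^2}\)\ud y+\f{2}{\va^2\rho^2}\int f(\Q_\va)\,\phi\!\(\f{|y|^2}{\rho^2}\)\ud y,
$$
and integration in $\rho$ from $r$ to $R$ gives \eqref{Monotone1}. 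The only real obstacle is careful bookkeeping: one must track the weights consistently through both rounds of integration by parts and verify that the factor $2(|y|^2/\rho^2)\phi'$ produced by differentiating $\phi$ lines up exactly between the two identities so that all ``intermediate'' weighted terms cancel. Conceptually this is the classical Price-type monotonicity argument for stationary harmonic maps, adapted to the $\va$-penalized Landau--de Gennes functional and with the standard radial cutoff replaced by the smooth weight $\phi(|y|^2/\rho^2)$.
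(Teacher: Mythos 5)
Your argument is correct, and the computation checks out: differentiating $\Theta_\rho^{\phi}$ yields $\rho^2\f{d}{d\rho}\Theta_\rho^{\phi}=-\int e_\va\phi-2\int e_\va\f{|y|^2}{\rho^2}\phi'$, and multiplying \eqref{EL} (in the form $-\Delta\Q_\va+\va^{-2}\nabla_\Q f(\Q_\va)=\mathbf{O}$) by $(y\cdot\na)\Q_\va\,\phi(|y|^2/\rho^2)$ and integrating by parts gives, after the bookkeeping you describe, $-\f12\int|\na\Q_\va|^2\phi-\int|\na\Q_\va|^2\f{|y|^2}{\rho^2}\phi'=-2\int|\f{y}{\rho}\cdot\na\Q_\va|^2\phi'+\f{3}{\va^2}\int f\phi+\f{2}{\va^2}\int f\f{|y|^2}{\rho^2}\phi'$, from which the weighted terms cancel exactly (using crucially that $n=3$, so $\pa_k y_k=3$ and the prefactor $1/r$ in $\Theta_r^\phi$ is scale-critical). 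The paper itself does not reproduce a proof but cites \cite[Proposition 2.2]{FWW25}; the Pohozaev/multiplier method you employ is the standard and essentially the only route to this type of Price--Schoen monotonicity formula with a smooth radial weight, and is what the cited reference carries out.
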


A direct consequence is as follows.

\begin{cor}\label{monotonecor}
Under the same assumption of Proposition \ref{Monotone}, for any $ 0<r<\f{1}{10}\dist(x,\pa\om) $,
$$
\int_{B_{4r}(x)}\f{1}{r}\(\left|\f{y-x}{r}\cdot\na\Q_{\va}\right|^2+\f{1}{\va^2}f(\Q_{\va})\)\ud y\leq C\(\Theta_r^{\phi}(\Q_{\va},x)-\Theta_{\f{r}{2}}^{\phi}(\Q_{\va},x)\),
$$
where $ C>0 $ is an absolute constant.
\end{cor}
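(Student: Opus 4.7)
The plan is to derive the inequality directly from the monotonicity identity \eqref{Monotone1} by specializing it to the dyadic interval $[r/2, r]$ and invoking the quantitative lower bounds on $\phi$ and $-\phi'$ from Definition \ref{defnofphi}. After setting $R = r$ and inner radius $r/2$ in \eqref{Monotone1}, both resulting $\rho$-integrands are pointwise nonnegative, since $\phi \geq 0$ and $-\phi' \geq 0$ on $[0, \infty)$, so they can be estimated from below term by term.

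For the bulk-potential integral I would use $\phi(t) \geq 1$ on $[0, 8]$ together with $1/\rho^2 \geq 1/r^2$ for $\rho \in [r/2, r]$. On the region $\{|y-x|^2 \leq 8\rho^2\}$ the cutoff becomes bounded below by an absolute constant, so it can simply be dropped; the $\rho$-integration over an interval of length $r/2$ then produces the prefactor $1/r$. For the gradient integral I would use $-\phi'(t) \geq 1$ on $[0, 8]$ together with the elementary identity
\[
\left|\f{y-x}{\rho}\cdot\na\Q_\va\right|^2 = \(\f{r}{\rho}\)^2\left|\f{y-x}{r}\cdot\na\Q_\va\right|^2 \geq \left|\f{y-x}{r}\cdot\na\Q_\va\right|^2,
\]
valid because $\rho \leq r$. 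This lets me replace the $\rho$-scaled directional derivative by the $r$-scaled one, at which point the same argument as for the bulk term yields the $1/r$ prefactor.

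The only nontrivial bookkeeping is matching the spatial region on which these pointwise lower bounds are available with the ball $B_{4r}(x)$ on the left-hand side. Uniformly in $\rho \in [r/2, r]$, the set $\{|y-x|^2 \leq 8\rho^2\}$ contains $B_{r\sqrt{2}}(x)$, while the cutoff $\phi(|y-x|^2/\rho^2)$ is supported in $B_{\sqrt{10}\rho}(x) \subset B_{4r}(x)$, so the effective contributions to both sides of \eqref{Monotone1} sit inside the prescribed ball. Summing the two lower bounds yields the claimed inequality with an absolute constant $C$ determined solely by the numerical parameters of $\phi$. I do not expect any serious obstacle: the argument is mechanical once the bounds on $\phi$ and $\phi'$ from Definition \ref{defnofphi} are invoked, and the main care required is tracking the scale conversions between $\rho$ and $r$.
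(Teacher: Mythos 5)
Your overall strategy --- specialize \eqref{Monotone1} to the interval $[r/2,r]$, drop the cutoff using $\phi \geq 1$ and $-\phi' \geq 1$ on $[0,8]$ from Definition~\ref{defnofphi}, and convert $\rho$-scales to $r$-scales --- is the right skeleton, and your elementary estimates are correct as far as they go. The gap is precisely in the bookkeeping you yourself single out as ``the only nontrivial'' point and then wave through. As you correctly compute, the pointwise lower bounds are available only on $\{|y-x| \le 2\sqrt{2}\rho\}$, and over the entire interval $\rho \in [r/2,r]$ this set is guaranteed to contain only $B_{\sqrt{2}r}(x)$; that, not $B_{4r}(x)$, is the ball your argument controls. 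Your next observation --- that the cutoff is supported in $B_{\sqrt{10}\rho}(x) \subset B_{4r}(x)$ --- addresses the right-hand side of \eqref{Monotone1}, not the left-hand side of the corollary, and in fact points squarely at the problem: since $\sqrt{10} < 4$, the increment $\Theta_r^\phi - \Theta_{r/2}^\phi$ depends only on $\Q_\va$ restricted to $B_{\sqrt{10}r}(x)$, so it carries no information at all about the annulus $B_{4r}(x)\setminus B_{\sqrt{10}r}(x)$, over which the left-hand side integrates a nonnegative quantity. The step ``summing the two lower bounds yields the claimed inequality'' therefore does not follow.

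What your argument genuinely proves is the inequality with $B_{\sqrt{2}r}(x)$ on the left (and, with a Fubini-type rewriting of the $\rho$-integral that keeps the $\rho$-dependent inner radius, $B_{\kappa r}(x)$ for any $\kappa < 2\sqrt{2}$). Reaching $B_{4r}(x)$ from an increment $\Theta_r^\phi - \Theta_{r/2}^\phi$ is not possible by this method, precisely because the support of $\phi$ forces the increment to be blind outside $B_{\sqrt{10}r}(x)$. The claimed radius would require either a larger $\Theta$-interval on the right (such as $\Theta_{4r}^\phi - \Theta_{2r}^\phi$, for which the analogous computation does reach $B_{4r}(x)$ since then $2\sqrt{2}\rho \geq 4\sqrt{2}r$ throughout) or a more widely supported cutoff. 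You should have flagged this scale mismatch explicitly instead of concluding that no serious obstacle remained.
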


The lemma below is from standard regularity theory of elliptic equations, providing an a priori estimate for solutions of \eqref{EL}.

\begin{lem}[\cite{FWW25}, Lemma 2.4]\label{Apriori}
Let $ \va\in(0,1) $, $ M,r>0 $, and $ x\in\R^3 $. Assume that $ \Q_{\va}:B_{2r}(x)\to\Ss_0 $ is a weak solution of \eqref{EL} with $
\|\Q_{\va}\|_{L^{\ift}(B_{2r}(x))}\leq M $. Then, $ \Q_{\va} $ is smooth in $ B_{2r}(x) $ and satisfies
$$
\|\na\Q_{\va}\|_{L^{\ift}(B_r(x))}\leq C\(\f{1}{\va}+\f{1}{r}\),
$$
where $ C>0 $ depends only on $ a,b,c $, and $ M $.
\end{lem}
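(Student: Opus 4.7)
The plan is to rescale the equation so the small parameter $\va$ disappears, then apply standard interior elliptic estimates and bootstrap to obtain both smoothness and the gradient bound. The polynomial structure of the nonlinearity in \eqref{EL} makes this bootstrap routine once the $L^\ift$ bound on $\Q_{\va}$ is in hand.

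Set $\rho:=\min\{\va,r\}$ and let $\wt\Q(y):=\Q_{\va}(x+\rho y)$, which is defined on $B_{2r/\rho}(0)$. A direct calculation converts \eqref{EL} into
$$
-\Delta_y\wt\Q=\f{\rho^2}{\va^2}\(a\wt\Q+b\wt\Q^2-\f{b}{3}|\wt\Q|^2\I-c|\wt\Q|^2\wt\Q\),
$$
where the prefactor $\rho^2/\va^2\leq 1$ because $\rho\leq\va$, and the entire right-hand side is pointwise bounded by some $C(a,b,c,M)$ thanks to $\|\wt\Q\|_{L^\ift}\leq M$. Since $\rho\leq r$ we have $r/\rho\geq 1$, so for every $y_0\in B_{r/\rho}(0)$ the unit ball $B_1(y_0)$ is contained in $B_{2r/\rho}(0)$. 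The standard interior $W^{2,p}$ estimate for the Laplacian, with any $p>3$, combined with the Sobolev embedding $W^{2,p}\hookrightarrow C^{1,\al}$ then gives
$$
\|\na_y\wt\Q\|_{L^\ift(B_{1/2}(y_0))}\leq C\bigl(\|\wt\Q\|_{L^\ift(B_1(y_0))}+C(a,b,c,M)\bigr)\leq C(a,b,c,M).
$$
Covering $B_{r/\rho}(0)$ by such half-balls and undoing the scaling via $|\na\Q_{\va}(x+\rho y)|=\rho^{-1}|\na_y\wt\Q(y)|$ produces
$$
\|\na\Q_{\va}\|_{L^\ift(B_r(x))}\leq\f{C(a,b,c,M)}{\rho}=\f{C(a,b,c,M)}{\min\{\va,r\}}\leq C(a,b,c,M)\(\f{1}{\va}+\f{1}{r}\),
$$
which is the claimed gradient bound. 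Full smoothness of $\Q_{\va}$ on $B_{2r}(x)$ follows from a standard Schauder bootstrap applied to $\wt\Q$: the $L^\ift$ bound on $\na_y\wt\Q$ just derived elevates the right-hand side to $C^{0,\al}$, Schauder theory delivers $\wt\Q\in C^{2,\al}$, and iteration promotes $\wt\Q$ to $C^k$ for every $k$.

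The main obstacle, to my eye, is simply finding a single rescaling that works uniformly across the two regimes $\va\leq r$ and $\va>r$. The choice $\rho=\min\{\va,r\}$ resolves both at once: it keeps the rescaled coefficient $\rho^2/\va^2$ at or below one so the polynomial nonlinearity does not blow up in the new variables, while simultaneously ensuring $r/\rho\geq 1$ so the rescaled domain is large enough to accommodate interior estimates with universal constants. Everything else is textbook elliptic regularity, which is why no auxiliary results from the paper itself are needed.
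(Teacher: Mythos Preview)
Your argument is correct. The paper does not supply its own proof of this lemma: it is quoted verbatim from \cite{FWW25}, so there is no in-paper argument to compare against. Your rescaling by $\rho=\min\{\va,r\}$ followed by interior $W^{2,p}$ estimates and a Schauder bootstrap is precisely the standard route for results of this type, and all the steps check out (in particular $\rho\leq r$ guarantees $B_1(y_0)\subset B_{2r/\rho}(0)$ for every $y_0\in B_{r/\rho}(0)$, and $\rho\leq\va$ keeps the rescaled right-hand side uniformly bounded).
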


\section{Characterization of line defect}\label{sec3}

\subsection{Regular scales} To describe the bad behavior of a sequence of minimizers, based on our previous strategies \cite[Section 2.5]{FWW25}, we generalize the regular scales associated with the setting in this paper. The regular scales enable us to define different types of bad sets in the limit of $\Q_{\va}$. 

\begin{defn}[Regular scales]\label{regular}
Let $ \Q\in C^{\ift}(\om,\Ss_0) $. For $ x\in\om $ and $ \Lda>0 $, define 
\begin{align*}
r(\Q,x)&:=\sup\{r>0:r\|(|\na\Q|+r|D^2\Q|)\|_{L^{\ift}(B_r(x))}\leq 1\},\\
r^{\Lda}(\Q,x)&:=\sup\{r>0:E_{\va}(\Q,B_r(x))\leq \Lda r\}.
\end{align*}
\end{defn}

\begin{rem}
Compared to the regular scale introduced in \cite{FWW25}, we include the second derivative of $ \Q $ in $ r(\Q,\cdot) $. It will help us obtain estimates for $ D^2\Q $, which are used in the proof of compactness in the $ W^{1,p} $ space for $ 1<p<2 $.
\end{rem}

Assume $ \Q_{\va}\in H^1(\om,\Ss_0) $ is a local minimizer of \eqref{LdG}. For parameters $ \Lda,r>0 $, we define the type I bad set as
$$
\Bad_{\op{I}}(\Q_{\va};r):=\{y\in\om:r(\Q_{\va},y)<r\}.
$$
Also let the type II bad set of $ \Q_{\va} $ be
$$
\Bad_{\op{II}}(\Q_{\va};r,\Lda):=\{y\in\om:r^{\Lda}(\Q_{\va},y)<r\}.
$$
In \cite{FWW25}, we comprehensively analyze the behavior of the type I bad set, and in this paper, we aim to study the type II bad set.

\subsection{Clearing-out property} In this section, we consider the clearing-out result for minimizers of \eqref{LdG} with logarithmic energy regime. Intuitively, for a minimizer $ \Q_{\va} $ in $ B_2 $,
$$
E_{\va}(\Q_{\va},B_2)\ll\log\f{1}{\va}\quad\Longrightarrow\quad E_{\va}(\Q_{\va},B_1)\lesssim 1.
$$

\begin{prop}[\cite{Can17}, Proposition 8]\label{partialregularityline}
Let $ \va\in(0,1) $, $ M>0 $, and $ x\in\R^3 $. There exists $ \eta\in(0,1) $, depending only on $ a,b,c $, and $ M $, such that the following properties hold. Assume $ r\in(\eta^{-1}\va,1) $ and $ \Q_{\va}\in H^1(B_{2r}(x),\Ss_0) $ is a local minimizer of \eqref{LdG} with $ \|\Q_{\va}\|_{L^{\ift}(B_{2r}(x))}\leq M $. If
$$
E_{\va}(\Q_{\va},B_{2r}(x))\leq\eta r\log\f{r}{\va},
$$
then
$$
E_{\va}(\Q_{\va},B_r(x))\leq Cr,
$$
where $ C>0 $ depends only on $ a,b,c $, and $ M $. 
\end{prop}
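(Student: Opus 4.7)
I would argue by contradiction: if the conclusion fails for every $\eta$, then a rescaling produces a sequence of minimizers with a line defect in the unit ball, and the presence of that defect forces a logarithmic lower bound on the energy that contradicts the hypothesis.

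Suppose the statement fails. Pick $\eta_k\downarrow 0$, $\va_k\in(0,1)$, $r_k\in(\eta_k^{-1}\va_k,1)$, $x_k\in\R^3$, and local minimizers $\Q_{\va_k}$ of \eqref{LdG} on $B_{2r_k}(x_k)$ with $\|\Q_{\va_k}\|_{L^\infty}\le M$, satisfying the hypothesis with $\eta=\eta_k$ but with $r_k^{-1}E_{\va_k}(\Q_{\va_k},B_{r_k}(x_k))\to\infty$. Setting $\widetilde\Q_k(y):=\Q_{\va_k}(x_k+r_ky)$ and $\widetilde\va_k:=\va_k/r_k<\eta_k\to 0$, each $\widetilde\Q_k$ is a local minimizer of \eqref{LdG} on $B_2$ with parameter $\widetilde\va_k$, $L^\infty$-norm at most $M$, and satisfying
$$E_{\widetilde\va_k}(\widetilde\Q_k,B_2)\le\eta_k\log(1/\widetilde\va_k), \qquad E_{\widetilde\va_k}(\widetilde\Q_k,B_1)\to\infty.$$

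The heart of the argument is to extract a genuine line defect from this divergence. Combining the modified monotonicity formula (Proposition \ref{Monotone}) with a small-energy $\varepsilon$-regularity statement should produce, after passing to a subsequence, a $1$-rectifiable concentration set $\Sigma\subset\overline{B_1}$ with $\HH^1(\Sigma)\ge c_0>0$, along which $\widetilde\Q_k$ develops a topological singularity in the limit. On almost every plane slice transverse to $\Sigma$, the trace of $\widetilde\Q_k$ carries the nontrivial monodromy in $\pi_1(\R\PP^2)=\Z/2$, so the two-dimensional clearing-out of Canevari \cite{Can15} delivers a lower bound of order $\log(1/\widetilde\va_k)$ on the planar energy of each slice. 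Integrating along $\Sigma$ via the coarea formula yields
$$E_{\widetilde\va_k}(\widetilde\Q_k,B_2)\ge c_0\log(1/\widetilde\va_k),$$
contradicting the upper bound $\eta_k\log(1/\widetilde\va_k)$ once $\eta_k<c_0$.

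The main obstacle is building the set $\Sigma$ with positive $\HH^1$-measure and with a well-defined transverse monodromy. Unlike the $\Ss^1$-valued Ginzburg-Landau theory, one cannot write $\widetilde\Q_k$ in polar form, so the monodromy must be read off using an $\Ss^2$-lift of $\widetilde\Q_k$ defined on small balls where the minimizer is close to $\cN$, or equivalently through a $\BMO$-type director construction. The covering scheme for $\Sigma$ is in the spirit of the Cheeger-Naber quantitative stratification \cite{CN13} referenced in the introduction of the paper, iterating monotonicity through many dyadic scales to refine the initial bad set into a rectifiable one. Once $\Sigma$ is produced and the transverse topology is under control, the per-slice energy lower bound is a straightforward application of Canevari's $2$D analysis; a cleaner alternative would be to bypass $\Sigma$ altogether and combine Corollary \ref{monotonecor} with an $\varepsilon$-regularity statement localized at the centre $x$, but either route reduces to the same planar clearing-out.
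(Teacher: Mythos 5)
The paper does not prove this proposition; it is quoted verbatim from \cite[Proposition~8]{Can17}. Canevari's proof is a direct comparison argument (select a good spherical slice with controlled trace energy, show via a Sobolev/slicing argument that the trace stays close to $\cN$, extend it to a competitor with energy $O(r)$, and invoke minimality), so your contradiction/compactness scheme is a genuinely different route. Unfortunately, as sketched it has a fatal gap, and I do not see how to close it along the lines you propose.

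The gap is in the step where you ``extract a genuine line defect from this divergence.'' After rescaling you have
$E_{\widetilde\va_k}(\widetilde\Q_k,B_2)\le\eta_k\log(1/\widetilde\va_k)$ with $\eta_k\to 0$, so the normalized energy measures $\mu_k:=(\log(1/\widetilde\va_k))^{-1}\,e_{\widetilde\va_k}(\widetilde\Q_k)\,\ud y$ satisfy $\mu_k(B_2)\to 0$, hence $\mu_k\wc^*0$ in $\M(B_2)$. Any ``concentration set'' built from these measures is therefore empty. The fact that $E_{\widetilde\va_k}(\widetilde\Q_k,B_1)\to\infty$ only says the energy diverges sub-logarithmically, and sub-logarithmic divergence cannot carry a $\pi_1(\cN)$-nontrivial monodromy along a curve of positive $\HH^1$-measure, because such a defect costs energy $\gtrsim\log(1/\widetilde\va_k)$ --- this is exactly the lower bound you want to invoke on each transverse slice. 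So the configuration your argument needs ($\HH^1(\Sigma)\ge c_0>0$ with nontrivial transverse monodromy, coexisting with energy $\le\eta_k\log(1/\widetilde\va_k)$) is not merely hard to produce; it is impossible. In addition, the phrase ``a small-energy $\varepsilon$-regularity statement'' and your ``cleaner alternative'' of an $\varepsilon$-regularity localized at the centre are both essentially the proposition you are trying to prove, so those variants are circular. Finally, since $E_{\widetilde\va_k}(\widetilde\Q_k,B_1)\to\infty$, there is no weak $H^1$ limit $\Q_0$ on $B_1$ to which one could attach a singular set or read off monodromy in the limit. A contradiction scheme for this statement would have to identify a different quantity that is forced both to stay small (by the $\eta_k\log$ hypothesis) and to be bounded below (by the failure of the conclusion), and the ingredients you list do not supply that. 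The direct comparison-map construction is the route that actually works.
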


A simple corollary is as follows.

\begin{cor}\label{clearoutcor}
Let $ \va\in(0,1) $, $ M>0 $, and $ x\in B_2 $. Assume that $ \Q_{\va}\in H^1(B_4,\Ss_0) $ is a local minimizer of \eqref{LdG} with $ \|\Q_{\va}\|_{L^{\ift}(B_4)}\leq M $. Then there exists $ \eta,\Lda>0 $, depending only on $ a,b,c $, and $ M $ such that if $ r\in(\eta^{-1}\va,1) $ and
$$
\Bad_{\op{II}}(\Q_{\va};r,\Lda)\cap B_{\f{r}{2}}(x)\neq\emptyset,
$$
then
$$
E_{\va}(\Q_{\va},B_r(x))\geq \eta r\log\f{r}{\va}.
$$
\end{cor}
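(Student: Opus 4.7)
The plan is to argue by contraposition: I assume $E_\va(\Q_\va, B_r(x)) < \eta r \log\f{r}{\va}$ for suitably small $\eta > 0$, and show that $\Bad_{\op{II}}(\Q_\va; r, \Lda) \cap B_{r/2}(x) = \emptyset$ for a correspondingly large $\Lda > 0$, both depending only on $a,b,c,$ and $M$.

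For any $y_0 \in B_{r/2}(x)$, the inclusion $B_{r/2}(y_0) \subset B_r(x)$ gives
$$E_\va(\Q_\va, B_{r/2}(y_0)) \leq E_\va(\Q_\va, B_r(x)) < \eta r \log\f{r}{\va}.$$
If $\eta$ is small relative to the constant $\eta_0$ of Proposition \ref{partialregularityline} and $r > \eta^{-1}\va$ is large so that $\log\f{r}{4\va}$ is comparable to $\log\f{r}{\va}$, then the hypothesis of Proposition \ref{partialregularityline} is satisfied at $y_0$ with radius $r/4$, yielding
$$E_\va(\Q_\va, B_{r/4}(y_0)) \leq C_0 \cdot \f{r}{4},$$
where $C_0$ depends only on $a,b,c$, and $M$. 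This linear energy bound at scale $r/4$ is the starting point.

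Next, I would invoke the modified monotonicity formula of Proposition \ref{Monotone} at $y_0$ to propagate this estimate to the scales required by the definition of $\Bad_{\op{II}}$. The non-decreasingness of $\Theta_\rho^\phi(\Q_\va, y_0)$, together with the sandwich comparison
$$\f{1}{\rho}E_\va(\Q_\va, B_{2\sqrt{2}\rho}(y_0)) \leq \Theta_\rho^\phi(\Q_\va, y_0) \leq \f{60}{\rho}E_\va(\Q_\va, B_{\sqrt{10}\rho}(y_0)),$$
allows me to convert the constant-scale bound obtained from Proposition \ref{partialregularityline} into a linear estimate $E_\va(\Q_\va, B_\rho(y_0)) \leq \Lda \rho$ at a scale $\rho$ comparable to $r$, for $\Lda$ a sufficiently large multiple of $C_0$. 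This forces $r^\Lda(\Q_\va, y_0) \geq r$, contradicting $y_0 \in \Bad_{\op{II}}(\Q_\va; r, \Lda)$ and completing the proof.

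The main difficulty is reconciling the scales: Proposition \ref{partialregularityline} naturally outputs a linear bound at scale $r/4$, while $\Bad_{\op{II}}(\Q_\va; r, \Lda)$ is defined in terms of the regular scale reaching $r$. The geometric constants $\sqrt{10}$ and $2\sqrt{2}$ in the monotonicity comparison produce multiplicative losses that must be absorbed into the choice of $\Lda$, while the constraint $r > \eta^{-1}\va$ and the smallness of $\eta$ relative to $\eta_0$ absorb the logarithmic mismatch between $\log\f{r}{\va}$ and $\log\f{r}{4\va}$. Carefully sequencing the choices of $\eta_0$, $C_0$, $\Lda$, and $\eta$ (in that order) is the bookkeeping that completes the argument; apart from this, the proof is a direct combination of the partial-regularity result from Canevari and the monotonicity formula.
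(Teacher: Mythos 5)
Your overall strategy — contraposition, together with an application of Proposition \ref{partialregularityline} at the off-center point $y_0\in B_{r/2}(x)$ after noting $B_{r/2}(y_0)\subset B_r(x)$ — is the right starting point, and the logarithmic bookkeeping you describe to pass from $\eta r\log\frac{r}{\va}$ to the threshold $\eta_0\,\frac{r}{4}\log\frac{r/4}{\va}$ of Proposition \ref{partialregularityline} is fine once $\eta$ is taken small and $r>\eta^{-1}\va$. This correctly yields $E_\va(\Q_\va, B_{r/4}(y_0))\leq C_0\,\frac{r}{4}$.

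The gap is in your final step. You claim that the monotonicity of $\Theta_\rho^\phi(\Q_\va,y_0)$, combined with the sandwich inequality, lets you ``propagate'' the linear bound at scale $r/4$ up to a linear bound $E_\va(\Q_\va,B_\rho(y_0))\leq\Lda\rho$ at $\rho$ comparable to $r$, and hence that $r^\Lda(\Q_\va,y_0)\geq r$. This cannot work: $\Theta_\rho^\phi$ is \emph{non-decreasing} in $\rho$, so an upper bound on $\Theta^\phi$ at a small scale only transfers to \emph{smaller} scales, never larger. In fact there is no way to control $E_\va(\Q_\va,B_\rho(y_0))$ for $\rho$ close to $r$ from your hypothesis, since $B_\rho(y_0)\not\subset B_r(x)$ when $y_0$ lies near $\partial B_{r/2}(x)$, and the hypothesis provides a $\log$-scale, not linear-scale, energy bound there in any case. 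What your steps 1--3 actually establish is $r^{C_0}(\Q_\va,y_0)\geq r/4$, i.e.\ $\Bad_{\op{II}}(\Q_\va; r/4, C_0)\cap B_{r/2}(x)=\emptyset$, which is the corollary shifted by a factor of $4$ in the bad-set radius (this rescaled form is what is actually used later, e.g.\ in Lemma \ref{badsetcoveruse}). The monotonicity step should simply be deleted: the bound $E_\va(\Q_\va,B_{r/4}(y_0))\leq C_0\,\frac{r}{4}$ already gives $r^{C_0}(\Q_\va,y_0)\geq r/4$ directly from the definition of the regular scale, and one takes $\Lda=C_0$. If you want literally $r^\Lda(\Q_\va,y_0)\geq r$ you would need control of the energy on $B_{3r/2}(x)$, which the hypothesis does not supply.
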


We now establish a quantitative form of Proposition \ref{partialregularityline}, which plays a significant role in the proof of our main results.

\begin{lem}\label{regularscalelem}
Let $ \va,\sg,\theta\in(0,1) $, $ \beta\in(0,\f{1}{2}] $, $ M,r>0 $, and $ x\in\R^3 $. Assume that $ \Q_{\va}\in H^1(B_{20r}(x),\Ss_0) $ is a local minimizer of \eqref{LdG}, satisfying
$$
E_{\va}(\Q_{\va},B_{20r}(x))\leq Mr\log\f{1}{\va}\quad\text{and}\quad\|\Q_{\va}\|_{L^{\ift}(B_{20r}(x))}\leq M.
$$
There are $ \eta,\Lda>0 $, depending only on $ a,b,c,M,\sg $, and $ \theta $, such that if $ r\in(\va^{\theta},1) $ and $ \va\in(0,\eta) $, then the following properties hold. Assume that
$$
\Theta_r^{\phi}(\Q_{\va},x)-\Theta_{\beta r}^{\phi}(\Q_{\va},x)<\eta\log\f{1}{\va},
$$
and for some $ \vv\in\Ss^2 $,
$$
\f{1}{r}\int_{B_r(x)}|\vv\cdot\na\Q_{\va}|^2<\eta\log\f{1}{\va}.
$$
If there exists $ y\notin B_{\sg r}(x+\op{span}\{\vv\})\cap B_r(x) $ such that
$$
\Theta_{r}^{\phi}(\Q_{\va},y)-\Theta_{\beta r}^{\phi}(\Q_{\va},y)<\eta\log\f{1}{\va},
$$
then $ r^{\Lda}(\Q_{\va},x)\geq \f{r}{2} $.
\end{lem}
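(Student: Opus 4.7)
I argue by compactness and contradiction. Fix $\Lda>0$ to be chosen below; assuming the statement fails, there exist $\eta_k\da 0$ and data $(\va_k,r_k,x_k,\vv_k,y_k,\beta_k,\Q_{\va_k})$ satisfying the three hypotheses with $\eta=\eta_k$ but with $r^{\Lda}(\Q_{\va_k},x_k)<r_k/2$. Rescale: $\tilde\Q_k(z):=\Q_{\va_k}(x_k+r_kz)$ is a local minimizer of \eqref{LdG} on $B_{20}$ with effective parameter $\tilde\va_k:=\va_k/r_k\le\va_k^{1-\theta}\to 0$ (using $r_k>\va_k^{\theta}$). Writing $L_k:=\log(1/\tilde\va_k)\to+\ift$, $L_k$ and $\log(1/\va_k)$ are comparable, with constants depending only on $\theta$. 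The three rescaled hypotheses then read
\begin{equation*}
\Theta^{\phi}_1(\tilde\Q_k,0)-\Theta^{\phi}_{\beta_k}(\tilde\Q_k,0),\,\,\int_{B_1}|\vv_k\cdot\na\tilde\Q_k|^2,\,\,\Theta^{\phi}_1(\tilde\Q_k,\tilde y_k)-\Theta^{\phi}_{\beta_k}(\tilde\Q_k,\tilde y_k)\,=\,o(L_k),
\end{equation*}
with $\tilde y_k:=(y_k-x_k)/r_k$; the negation of the conclusion reads $E_{\tilde\va_k}(\tilde\Q_k,B_{1/2})>\Lda/2$. Extract subsequences so that $\vv_k\to\vv_\ift\in\Ss^2$, $\tilde y_k\to\tilde y_\ift$ with $\dist(\tilde y_\ift,\R\vv_\ift)\ge\sg$, and $\beta_k\to\beta_\ift\in[0,1/2]$.

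\textbf{Structure of the limit measure.} Set $\mu_k:=L_k^{-1}e_{\tilde\va_k}(\tilde\Q_k)\,\ud z\in\M(B_{20})$. The uniform energy bound gives $\sup_k\mu_k(B_{20})<+\ift$, so along a subsequence $\mu_k\wc^*\mu$ for some $\mu\in\M(\ol{B_{20}})$. Passing Proposition~\ref{Monotone} to the weak-$*$ limit gives the monotonicity of $\lambda\mapsto\Theta^{\phi}_\lambda(\mu,w):=\lambda^{-1}\int\phi(|z-w|^2/\lambda^2)\,\ud\mu(z)$ in $\lambda$, for every $w\in B_{20}$. Dividing the hypotheses by $L_k$ and letting $k\to\ift$ gives
\[\Theta^{\phi}_1(\mu,0)=\Theta^{\phi}_{\beta_\ift}(\mu,0)\quad\text{and}\quad\Theta^{\phi}_1(\mu,\tilde y_\ift)=\Theta^{\phi}_{\beta_\ift}(\mu,\tilde y_\ift),\]
together with $\lim_{k\to\ift}L_k^{-1}\int_{B_1}|\vv_k\cdot\na\tilde\Q_k|^2=0$. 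Applying Corollary~\ref{monotonecor} at $0$ in rescaled form and passing to the limit shows that the radial energy density (from $0$) also vanishes in $\mu|_{B_1}$. Since at every $y\in B_1$ with $y\times\vv_\ift\ne 0$ the radial direction $y/|y|$ and $\vv_\ift$ span a $2$-plane, $\mu|_{B_1}$ concentrates on the locus $\{y\in B_1:y\times\vv_\ift=0\}=\R\vv_\ift\cap B_1$. Parametrizing this axis by arc length $t$, write $\mu|_{B_1}=\rho(t)\,\ud t$; the zero-drop identity $\Theta^{\phi}_1(\mu,0)=\Theta^{\phi}_{\beta_\ift}(\mu,0)$ then forces $\rho$ to be constant, giving $\mu|_{B_1}=c\,\HH^1\lfloor(\R\vv_\ift\cap B_1)$ for some $c\ge 0$.

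\textbf{Ruling out $c>0$ and contradiction.} Evaluating $\Theta^{\phi}_\lambda(\mu,\tilde y_\ift)$ on the putative line measure gives
\begin{equation*}
\Theta^{\phi}_\lambda(c\,\HH^1\lfloor\R\vv_\ift,\tilde y_\ift)=c\int_\R\phi\(d^2/\lambda^2+u^2\)\,\ud u,\qquad d:=\dist(\tilde y_\ift,\R\vv_\ift)\ge\sg>0.
\end{equation*}
The right-hand side is strictly increasing in $\lambda$ whenever $c>0$ (using that $\phi$ is nonincreasing with $\phi\ge 1$ on $[0,8]$), contradicting the zero drop at $\tilde y_\ift$. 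Therefore $c=0$ and $\mu|_{B_1}=0$. By upper semicontinuity of weak-$*$ convergence on closed sets interior to $B_{20}$, $\limsup_k\mu_k(\ol{B_{3/4}})=0$, i.e.~$E_{\tilde\va_k}(\tilde\Q_k,B_{3/4})=o(L_k)$. Cover $B_{1/2}$ by a bounded number $N$ of balls $B_{1/16}(x_i)$ with $x_i\in B_{1/2}$ and $B_{1/8}(x_i)\subset B_{3/4}$. For $k$ sufficiently large, $E_{\tilde\va_k}(\tilde\Q_k,B_{1/8}(x_i))<\eta'(1/16)\log((1/16)/\tilde\va_k)$, where $\eta'=\eta'(a,b,c,M)$ is the constant from Proposition~\ref{partialregularityline}; that proposition then yields $E_{\tilde\va_k}(\tilde\Q_k,B_{1/16}(x_i))\le C/16$ with $C=C(a,b,c,M)$. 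Summing gives $E_{\tilde\va_k}(\tilde\Q_k,B_{1/2})\le NC/16$; choosing $\Lda:=NC/8+1$ contradicts $E_{\tilde\va_k}(\tilde\Q_k,B_{1/2})>\Lda/2$.

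\textbf{Main obstacle.} The crucial step is the classification in the second paragraph: rigorously passing the modified monotonicity formula (Proposition~\ref{Monotone}) to the weak-$*$ limit measure $\mu$, and combining the vanishing of the $\vv_\ift$- and radial-directional derivatives with the zero-drop identity at $0$ to pin down $\mu|_{B_1}$ as a uniform line measure on $\R\vv_\ift$. The tensorial nature of the target $\Ss_0$ and the non-trivial geometry of $\cN\simeq\R\PP^2$ preclude the polar decomposition arguments used in the scalar and $\Ss^1$-valued Ginzburg-Landau setting of \cite{BBM04,BBO01,LR99,LR01}, so the proof must proceed directly at the level of the limit measure.
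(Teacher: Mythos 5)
Your overall strategy --- contradiction, blow-up by rescaling $\wt\Q_k(z)=\Q_{\va_k}(x_k+r_kz)$, passage to a weak-$*$ limit measure, showing the limit vanishes near the origin, then invoking Proposition~\ref{partialregularityline} and a covering to conclude --- follows the same outline as the paper's proof. Your final ``ruling out $c>0$'' computation and the covering of $B_{1/2}$ by balls where $\va$-log-smallness applies are both sound.

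The gap is in the classification step. You assert that because the radial quadratic form $y_\al y_\beta\,\ud\mu^{\al\beta}$ and the directional form $\vv_\ift^\al\vv_\ift^\beta\,\ud\mu^{\al\beta}$ vanish, and since $y/|y|$ and $\vv_\ift$ span a $2$-plane off the axis, ``$\mu\llcorner B_1$ concentrates on $\R\vv_\ift\cap B_1$.'' This inference is not valid. The matrix of limit measures $(\mu^{\al\beta})$ is positive semidefinite, so the two vanishing quadratic forms only force it to have rank at most one off the axis; its trace $\sum_\al\mu^{\al\al}$ can still carry mass there, since the energy could concentrate in the third direction, orthogonal to both $y/|y|$ and $\vv_\ift$. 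You are in effect conflating ``two directional components of the gradient vanish'' with ``the energy density vanishes.'' Nor is the subsequent writing $\mu\llcorner B_1=\rho(t)\,\ud t$ justified: even if $\mu$ were supported on the axis, nothing in your argument gives absolute continuity with respect to $\HH^1$. The paper closes exactly this gap by keeping the full tensorial decomposition $\mu^{\al\beta},\ \mu^f$ in play and invoking three structural facts that your proof never uses: (i) $1$-homogeneity of $\mu$ at $0$ and at $y$, derived from the zero-drop identities via the monotonicity formula and the argument of \cite[Lemma 3.1]{Mos03}; (ii) translation invariance of $\mu$ along $\vv$ in $B_{3/4}$, from \cite[Proposition 2.31]{FWZ24}; and, crucially, (iii) the structure theorem \cite[Proposition 2]{Can17}, asserting that $\supp\mu$ is a finite union of closed straight line segments. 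Only with (iii) does the combination of translation invariance along $\vv$ and $1$-homogeneity at the two distinct centers $0$ and $y$ (with $\dist(y,\R\vv)\ge\sg r$) force $\mu\equiv 0$. Without these ingredients your ``concentrates on the axis'' claim is unsupported, and the argument does not close.
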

\begin{proof}
Up to a translation, let $ x=0 $. Assume that the result is not true. There is a sequence of counterexamples $ \{\Q_{\va_i}\}\subset H^1(B_{20r_i},\Ss_0) $, together with $ r_i\in(\va_i^{\theta},1) $, $ \beta_i\in(0,\f{1}{2}) $, $ \va_i\in(0,\eta_i) $, $ \eta_i\to 0^+ $, $ \vv_i\in\Ss^2 $, and $ y\notin B_{\sg r_i}(\op{span}\{\vv_i\}) $ such that
\begin{gather}
E_{\va_i}(\Q_{\va_i},B_{20r_i})\leq Mr_i\log\f{1}{\va_i},\label{Evaiuse}\\
\Theta_{r_i}^{\phi}(\Q_{\va_i},0)-\Theta_{\beta_ir_i}^{\phi}(\Q_{\va_i},0)<\eta_i\log\f{1}{\va_i},\label{ass1}\\
\Theta_{r_i}^{\phi}(\Q_{\va_i},y_i)-\Theta_{\beta_ir_i}^{\phi}(\Q_{\va_i},y_i)<\eta_i\log\f{1}{\va_i},\label{ass11}
\end{gather}
and
\be
\f{1}{r_i}\int_{B_{r_i}}|\vv_i\cdot\na\Q_{\va_i}|^2<\eta_i\log\f{1}{\va_i}.\label{ass2}
\ee
Moreover, for some $ \Lda>0 $, to be determined later, we have $ r^{\Lda}(\Q_{\va_i},0)<\f{r}{2} $. Since $ r_i\in(\va_i^{\theta},1) $, we have
\be
\log\f{1}{\va_i}\leq \f{1}{1-\theta}\log\f{r_i}{\va_i}.\label{1theta}
\ee
Let $
\wt{\Q}_{\ol{\va}_i}(\cdot):=\Q_{\va_i}(r_i\cdot) $ and $ \ol{\va}_i=\f{\va_i}{r_i}\in(0,\va_i^{1-\theta}) $. We define
\begin{align*}
(\mu_i^{\al\beta})&:=\f{\pa_{\al}\wt{\Q}_{\ol{\va}_i}:\pa_{\beta}\wt{\Q}_{\ol{\va}_i}}{-2\log\ol{\va}_i}\ud y\in\M(B_4,\mathbb{M}^{3\times 3}),\\
\mu_i^f&:=\f{f(\wt{\Q}_{\ol{\va}_i})}{-\ol{\va}_i^2\log\ol{\va}_i}\ud y\in \M(B_4),\\
\mu_i&:=\f{e_{\ol{\va}_i}(\wt{\Q}_{\ol{\va}_i})}{-\log\ol{\va}_i}\ud y=\sum_{\al=1}^3\mu_i^{\al\al}+\mu_i^f\in\M(B_4).
\end{align*}
Note that 
\be
r^{\Lda}(\wt{\Q}_{\ol{\va}_i},0)<\f{1}{2}.\label{rLdasmall12}
\ee
It follows from \eqref{Evaiuse} and \eqref{1theta} that up to a subsequence,
\begin{gather*}
(\mu_i^{\al\beta})\wc^*(\mu^{\al\beta})\quad\text{in }\M(B_4,\mathbb{M}^{3\times 3}),\\
\mu_i^f\wc^*\mu^f,\,\,\mu_i\wc^*\mu\quad\text{in }\M(B_4),\\
y_i\to y\in\ol{B}_2,\,\,\vv_i\to\vv\in\Ss^2,\,\,\ol{\va}_i\to 0^+,\,\,\beta_i\to\beta\in\left[0,\f{1}{2}\right].
\end{gather*}
We have
\be
\mu=\sum_{\al=1}^3\mu^{\al\al}+\mu^f.\label{mueq}
\ee
After taking $ i\to+\ift $, we deduce from Corollary \ref{monotonecor} and \eqref{ass1} that 
\be
\int_{B_3}y_{\al}y_{\beta}\ud\mu^{\al\beta}=0,\quad\mu^f(B_3)=0.\label{B3zero}
\ee
With the help of almost the same arguments in \cite[Lemma 3.1]{Mos03}, we see that $\mu$ is $1$-homogeneous in $B_3$. That is, $ r^{-1}\mu(r\cdot)\llcorner B_3=\mu\llcorner B_3 $ for any $ r>0 $. Moreover,  \eqref{mueq} and \eqref{B3zero} imply
$$
\mu\llcorner B_3=\sum_{\al=1}^3\mu^{\al\al}.
$$
Similarly, we also obtain from \eqref{ass11} that $ \mu $ is $ 1 $-homogeneous in $ B_3(y) $. Given \eqref{ass2} and $ \vv_i\to \vv $, it follows that
$$
\int_{B_{\f{3}{4}}}\vv^{\al}\vv^{\beta}\ud\mu^{\al\beta}=0.
$$
Arguing as in \cite[Proposition 2.31]{FWZ24}, we obtain that $ \mu $ is invariant with respect to the translation along vectors $ \vv $ in $ B_{\f{3}{4}} $. Precisely, if $ A\subset B_{\f{3}{4}} $ is measurable and $ \lda\in\R $ with $ A+\lda\vv\subset B_{\f{3}{4}} $, then $ \mu(A+\lda\vv)=\mu(A) $. By \cite[Proposition 2]{Can17}, the support of $ \mu $ is a collection of finite closed straight line segments in a given compact subset. Then, the invariance of $ \mu $ with respect to $ \op{span}\{\vv\} $ and the homogeneity at $ 0 $, $ y $ imply that $ \mu\equiv 0 $ in $ B_{\f{3}{4}} $. We now apply Proposition \ref{partialregularityline} to get $ r^{\Lda}(\wt{\Q}_{\ol{\va}_i},0)\geq\f{1}{2} $ for some $ \Lda=\Lda(a,b,c,M)>0 $, which leads to a contradiction with \eqref{rLdasmall12}.
\end{proof}

\section{Proof of main results}\label{proofmaintheorem}

\subsection{Covering results} Combining the basic ingredients in previous sections, we are ready to present the covering of the type II bad set. Before we present new results, we first recall the covering property under the finite energy setting in \cite{FWW25}, concerning the type I bad set.

\begin{lem}\label{coverpoint}
Let $ M>0 $, $ \va\in(0,1) $, $ 0<r<R\leq 1 $, and $ x_0\in B_2 $. Assume that $ \Q_{\va}\in H^1(B_{2R}(x_0),\Ss_0) $ is a local minimizer of \eqref{LdG}, satisfying 
$$
R^{-1}E_{\va}(\Q_{\va},B_{2R}(x_0))+\|\Q_{\va}\|_{L^{\ift}(B_{2R}(x_0))}\leq M.
$$
There exist $ \eta,\Lda>0 $, depending only on $ a,b,c,M $ such that if $ r\in(\Lda\va,1) $, then we have $ \{x_i\}_{i=1}^N\subset B_R(x_0) $ such that
$$
\Bad_{\op{I}}(\Q_{\va};\eta r)\cap B_R(x_0)\subset\bigcup_{i=1}^NB_{r}(x_i),
$$
where $ N\in\Z_+ $ depends only on $ a,b,c $, and $ M $. In particular,
$$
\cL^3(B_r(\Bad_{\op{I}}(\Q_{\va};\eta r)\cap B_R(x_0)))\leq Cr^3,
$$
where $ C>0 $ depends only on $ a,b,c $, and $ M $.
\end{lem}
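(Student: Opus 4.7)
The plan is to show that, under the finite-energy hypothesis, the type I bad set concentrates around a finite number of effective defect centers whose count is controlled purely by the total energy. I would proceed in three steps, ending with the main obstacle of tightening the covering radius down from a naturally appearing scale $R/10$ to the sharp scale $r$.

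First, I would establish a uniform density lower bound at every bad point. Invoking Proposition \ref{partialregularityline} (which applies \emph{a fortiori} under the stronger finite-energy hypothesis $R^{-1}E_\va(\Q_\va, B_{2R}) \leq M$) together with Lemma \ref{Apriori}, there exist $\eta_0, \Lda_0 > 0$ depending only on $a, b, c, M$ such that, for $s \in (\Lda_0 \va, R/2)$,
\[
\Theta^\phi_s(\Q_\va, y) < \eta_0 \quad \Longrightarrow \quad r(\Q_\va, y) \geq s/C_0,
\]
for an absolute $C_0 > 0$. Choosing $\eta$ so that $C_0 \eta r > \Lda_0 \va$ (which fixes the constant $\Lda$ in the lemma statement via $\Lda = \Lda_0/(C_0 \eta)$), the contrapositive gives $\Theta^\phi_{C_0 \eta r}(\Q_\va, y) \geq \eta_0$ for every $y \in \Bad(\Q_\va; \eta r) \cap B_R(x_0)$. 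The modified monotonicity formula (Proposition \ref{Monotone}) then propagates this to the fixed larger scale: $\Theta^\phi_{R/10}(\Q_\va, y) \geq \eta_0$ for every such $y$.

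Next, I would extract the centers $\{x_i\}$ by a maximal disjoint selection: pick $\{x_i\}_{i=1}^{N_0} \subset \Bad(\Q_\va; \eta r) \cap B_R(x_0)$ such that $\{B_{R/20}(x_i)\}$ are pairwise disjoint, so by maximality $\{B_{R/10}(x_i)\}$ covers the bad set. Summing the density bound,
\[
N_0 \eta_0 \leq \sum_{i=1}^{N_0} \Theta^\phi_{R/10}(\Q_\va, x_i) \leq \frac{C}{R} E_\va(\Q_\va, B_{2R}) \leq C M,
\]
where the middle inequality uses the bounded overlap of the enlarged balls $\{B_{R/\sqrt{10}}(x_i)\}$, a dimensional consequence of the $R/20$-separation. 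Hence $N_0 \leq C(a, b, c, M)$, independent of $r$.

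The hard part is the last step: reducing the covering radius from the coarse $R/10$ above to the sharp $r$ while keeping the cardinality bounded independent of $r$. A naive Vitali refinement at scale $r$ would introduce an overlap factor $\sim (R/r)^3$, destroying the $r$-independence. To remedy this I would argue by compactness-contradiction: were the lemma to fail, sequences $\va_i, r_i \to 0^+$ would produce minimizers $\Q_{\va_i}$ requiring $N_i \to \infty$ balls of radius $r_i$ to cover the bad set inside $B_R(x_0)$. Passing to subsequential $H^1_{\loc} \cap C^k_{\loc}$-limits (available in the bounded-energy regime by \cite{FWW25}) yields a limiting harmonic map $\Q_0 : B_{2R} \to \cN$ whose singular set $\sing(\Q_0)$ is a finite set of isolated points (of cardinality controlled as in the previous step). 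A quantitative inversion of Proposition \ref{partialregularityline} in shrinking neighborhoods of each $p \in \sing(\Q_0)$ then confines $\Bad(\Q_{\va_i}; \eta r_i)$ to the $r_i$-neighborhood of $\sing(\Q_0)$ for large $i$, yielding the desired contradiction. The volume bound $\cL^3(B_r(\Bad \cap B_R)) \leq N \cdot \cL^3(B_{2r}) \leq C r^3$ then follows at once.
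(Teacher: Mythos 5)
The first two steps of your proposal are sound and in line with what one would expect: the $\eta$-regularity / clearing-out gives a density lower bound $\Theta^\phi_{R/10}(\Q_\va,y)\geq\eta_0$ at every bad point, and the maximal-disjoint-ball argument at the coarse scale $R/10$ yields a cover by $N_0\leq C(a,b,c,M)$ balls of radius $R/10$, independent of $r$ and $\va$. The problem is the third step, which is where the real content of the lemma lies.

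Your compactness-contradiction argument is qualitative where the lemma requires a quantitative, scale-invariant conclusion. Suppose the lemma fails along $\va_i,r_i\to 0^+$. Passing to a limit $\Q_0$ with finite singular set $\sing(\Q_0)=\{p_1,\dots,p_m\}$, uniform $C^k_{\loc}$ convergence away from $\sing(\Q_0)$ does show that for any \emph{fixed} $\delta>0$ one has $\Bad(\Q_{\va_i};\eta r_i)\subset B_\delta(\sing(\Q_0))$ for $i$ large. But this does not confine the bad set to the $r_i$-neighborhood of $\sing(\Q_0)$; it could a priori spread over a scale like $r_i^{1/2}$, which would require $O(r_i^{-3/2})$ balls of radius $r_i$ and would not contradict $N_i\to\infty$. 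The phrase ``a quantitative inversion of Proposition \ref{partialregularityline} in shrinking neighborhoods'' is exactly the step that needs a proof, and the soft compactness limit does not supply it: you would need to rescale around each $p_j$ at scale $r_i$, take a new limit, and iterate, at which point you are essentially forced into a dyadic, scale-by-scale argument. That dyadic argument is precisely the Cheeger--Naber quantitative stratification machinery that the paper invokes (via \cite[Lemma 3.3]{FWW25}, and analogously Lemma \ref{Maincovering} here): one tracks at each dyadic scale whether the density $\Theta^\phi$ drops by a fixed amount (this can happen at most $O(M/\eta_0)$ times, by monotonicity and the energy bound), and at the non-dropping scales a quantitative $0$-homogeneity/cone-splitting rigidity forces the bad points to lie within a fraction of the current scale of a single center. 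This is what produces a uniformly bounded ball count down to scale $r$; your outline omits it. The final volume bound is fine once the covering is established.
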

\begin{proof}
It follows from almost the same arguments of \cite[Lemma 3.3]{FWW25}.
\end{proof}

The lemma as follows gives a preliminary covering of the $ \Bad_{\op{II}}(\Q_{\va};\cdot,\cdot) $.

\begin{lem}\label{Maincovering}
Let $ \va,\theta\in(0,1) $ and $ M>0 $. Assume that $ \Q_{\va}\in H^1(B_{40},\Ss_0) $ is a local minimizer of \eqref{LdG}, satisfying
\be
E_{\va}(\Q_{\va},B_{40})\leq M\(\log\f{1}{\va}+1\)\quad\text{and}\quad\|\Q_{\va}\|_{L^{\ift}(B_{40})}\leq M.\label{Evaleq}
\ee
For any $ \ga\in(0,\f{1}{2}) $, there exist an absolute constant $ c_0>0 $, and $ \eta,\Lda,N_0>0 $, depending only on $ a,b,c,\ga,M,\theta $ such that the following holds. For any $ j\in\Z_+ $, let $ j=j_1+j_2 $, where $ j_1=\min\{j,N_0\} $. If $ \ga^j\in(\va^{\theta},1) $ and $ \va\in(0,\eta) $, we can cover $ \Bad_{\op{II}}(\Q_{\va};\ga^j,\Lda)\cap B_1 $ by at most $ j^{N_0} $ families of balls and each family consists of at most $ c_0^j\ga^{-3j_1-j_2} $ balls of radius $ \ga^j $.
\end{lem}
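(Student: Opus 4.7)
The plan is to perform a Cheeger--Naber-style iterative covering down to scale $\ga^j$, classifying each intermediate ball as either ``$\eta$-symmetric'' (for which Lemma \ref{regularscalelem} applies and confines the bad set to a tube around a line) or ``non-symmetric'' (for which the monotonicity formula of Proposition \ref{Monotone} charges an energy drop of at least $\eta\log(1/\va)$ to a tracked point). First, I fix the constants $\eta, \Lda$ from Lemma \ref{regularscalelem} applied with $\beta = \ga$, choose $\sg$ so small that the $\sg\ga^k$-tubular neighborhood of any line inside a ball of radius $\ga^k$ can be covered by $c_0\ga^{-1}$ balls of radius $\ga^{k+1}$, and set $N_0 := \lceil C(M)/\eta\rceil + 1$. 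Build inductively a family $\cF_0, \cF_1, \dots, \cF_j$ of balls of radii $1, \ga, \dots, \ga^j$, starting from $\cF_0 = \{B_1\}$, so that each $\cF_k$ covers $\Bad_{\op{II}}(\Q_\va; \ga^j, \Lda)\cap B_1$ and each $B\in\cF_k$ carries a distinguished tracked point $x^*_B \in B\cap \Bad_{\op{II}}(\Q_\va; \ga^j, \Lda)$ inherited from its parent whenever feasible.

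At each refinement step $k\to k+1$, I test each parent $B = B_{\ga^k}(x_B)\in\cF_k$ against the hypothesis of Lemma \ref{regularscalelem}: does there exist $\vv\in\Ss^2$ and $y\in B_{\ga^k}(x^*_B)\setminus B_{\sg\ga^k}(x^*_B + \op{span}\{\vv\})$ satisfying the three small-drop and one directional-gradient conditions at scales $\ga^k$ and $\ga\cdot\ga^k$? If yes (\emph{symmetric step at $B$}), the lemma yields $r^{\Lda}(\Q_\va, x^*_B)\geq \ga^k/2$, and applied also at slightly perturbed base points it forces $\Bad_{\op{II}}(\Q_\va; \ga^j, \Lda)\cap B$ into the $\sg\ga^k$-tube about $x^*_B + \op{span}\{\vv\}$, which is covered by $\leq c_0\ga^{-1}$ children of radius $\ga^{k+1}$. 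If no (\emph{non-symmetric step at $B$}), I fall back to a packing bound of $\leq c_0\ga^{-2}$ children, obtained by combining the $\phi$-weighted energy bound $\Theta^{\phi}_{\ga^k}(\Q_\va, x_B)\leq C(M)\log(1/\va)$ from Proposition \ref{Monotone} with a disjoint-ball packing argument on bad children (each carrying energy at least $\Lda\ga^{k+1}$), together with the preliminary point-defect covering of Lemma \ref{coverpoint} to absorb the zero-dimensional contribution and improve the naive volumetric $\ga^{-3}$ bound to $\ga^{-2}$.

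The essential bookkeeping is that each non-symmetric step drains the monotonicity budget at the tracked point: negating the hypothesis of Lemma \ref{regularscalelem} forces $\Theta^{\phi}_{\ga^k}(\Q_\va, x^*_B) - \Theta^{\phi}_{\ga^{k+1}}(\Q_\va, x^*_B)\geq \eta\log(1/\va)$. Along a descent chain where the tracked point is inherited, the telescoping sum of these drops via Proposition \ref{Monotone} is controlled by $\Theta^{\phi}_1(\Q_\va, x^*)\leq C(M)\log(1/\va)$, so each chain has at most $N_0$ non-symmetric steps. Indexing chains by the subset $S\subset\{0, \dots, j-1\}$ of non-symmetric positions with $|S|\leq N_0$ yields at most $\binom{j}{\leq N_0}\leq j^{N_0}$ families, and each family has cardinality $\leq c_0^j\ga^{-2|S| - (j - |S|)}\leq c_0^j\ga^{-2j_1 - j_2}$. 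I expect the two main obstacles to be (i) justifying the $\ga^{-2}$ branching at non-symmetric steps, which requires a Naber--Valtorta-style packing estimate for the $2$-dimensional content of the bad set using both monotonicity and Lemma \ref{coverpoint}; and (ii) correctly propagating the tracked point $x^*_{B'}$ from its parent so that the non-symmetric monotonicity drops along a single chain are all charged to one point, handling the occasional levels where $x^*_{\text{parent}}\notin B'$ forces a fresh selection.
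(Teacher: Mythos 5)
Your overall strategy --- a Cheeger--Naber-style iterative refinement from scale $1$ down to $\ga^j$, with a dichotomy at each step and the $j^{N_0}$ count coming from ``at most $N_0$ monotonicity drops'' --- is the same as the paper's. However, the two steps you flag as potential obstacles are indeed genuine gaps, and both would need to be repaired.

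The first gap is the claimed $\ga^{-2}$ branching at non-symmetric steps. The packing argument you sketch (each bad child at scale $\ga^{k+1}$ carries energy at least $\Lda\ga^{k+1}$, while the parent has energy at most $C(M)\ga^k\log\f{1}{\va}$ by Proposition \ref{Monotone}) yields at most $C(M)\log\f{1}{\va}/(\Lda\ga)$ disjoint bad children, which is \emph{not} uniform in $\va$. Nor can Lemma \ref{coverpoint} absorb the point contribution here: that lemma requires the rescaled energy $R^{-1}E_{\va}(\Q_\va,B_{2R})\le M$, whereas in the logarithmic regime the rescaled energy on a bad ball at scale $\ga^k$ is still of order $\log\f{1}{\va}$, outside its domain of applicability. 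The paper does not try for $\ga^{-2}$ at all: it uses the crude volumetric bound $c\ga^{-3}$ whenever a drop occurs, which is harmless because such steps happen at most $N_0$ times and the resulting excess factor depends only on the permitted parameters.

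The second gap is the charging step. You write that ``negating the hypothesis of Lemma \ref{regularscalelem} forces $\Theta^{\phi}_{\ga^k}(\Q_\va,x^*_B)-\Theta^{\phi}_{\ga^{k+1}}(\Q_\va,x^*_B)\ge\eta\log\f{1}{\va}$''. But Lemma \ref{regularscalelem} assumes a \emph{conjunction}: small drop at $x$, small directional gradient $\f{1}{r}\int_{B_r}|\vv\cdot\nabla\Q_\va|^2$ for some $\vv$, and small drop at a point $y$ off the tube. Its negation is a disjunction, and the branch where both drops are small but no direction has small gradient (a point-defect configuration) carries no monotonicity drop and is not charged by your budget. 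The paper handles this branch separately: if no small-gradient direction exists, Corollary \ref{monotonecor} together with the two small drops force the bad child to lie in $B_{\ga^j/4}(x)$, giving $O(1)$ branching. The paper's device of classifying each point $x$ by its own drop pattern $T^j(x)$ --- rather than tracking an inherited point down the chain --- is precisely what cleanly separates the drop conditions from the cylinder test and also resolves your propagation concern (ii): the tuple $T^j$ is a function of $x$ alone, so no re-selection of a tracked point is ever needed.
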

\begin{proof}
For simplicity, let $ \cB_{\Lda,\ga^j}:=\Bad_{\op{II}}(\Q_{\va};\ga^j,\Lda)\cap B_1 $. For any $ x\in B_2 $ and $ j\in\Z_{+} $, we define a $ j $-tuple $ T^j(x)\in\{0,1\}^j $ such that $ T_i^j(x)=1 $ for $ i\in\Z\cap[1,j] $ if and only if
\be
\Theta_{\ga^{i-3}}^{\phi}(\Q_{\va},x)-\Theta_{\eta\ga^{i-3}}^{\phi}(\Q_{\va},x)\geq\eta\log\f{1}{\va},\label{Thetadifference}
\ee
where we will determine $ \eta=\eta(a,b,c,\ga,\theta,M)>0 $ later. For $ S^j\in\{0,1\}^j $, define
$$
E_{S^j}:=\{x\in B_1:T^j(x)=S^j\}.
$$
For $ S^j\in\{0,1\}^j $, we define a collection of balls $ C_{\ga^j}(S^j) $ inductively. First, for any $ S^3\in\{0,1\}^3 $ take $ C_{\ga^3}(S^3) $ consisting of balls with radius $ \ga^3 $ such that $
\cB_{\Lda,\ga^3}\cap E_{S^3}\subset C_{\ga^3}(S^3) $.
Letting $ N_0\geq 3 $, we obtain the base of the induction. Assume that $ C_{\ga^{j-1}}(S^{j-1}) $ for $ (j-1) $-tuples are already constructed, consisting balls of radius $ \ga^{j-1} $ such that
$$
\cB_{\Lda,\ga^{j-1}}\cap E_{S^{j-1}}\subset C_{\ga^{j-1}}(S^{j-1}).
$$
For a $ j $-tuple $ S^j $, we let $ S^{j,j-1} $ be the $ (j-1) $-tuple by removing the last entry. We now establish $ C_{\ga^j}(S^j) $ by replacing each ball $ B_{\ga^{j-1}}(x) $ of $ C_{\ga^{j-1}}(S^{j,j-1}) $ by a minimal covering of $ B_{\ga^{j-1}}(x)\cap\cB_{\Lda,\ga^j}\cap E_{S^j} $, using balls of radius $ \ga^j $ with center in it. Note that $ \cB_{\Lda,\ga^j}\subset\cB_{\Lda,\ga^{j-1}} $ and $ E_{S^j}\subset E_{S^{j,j-1}} $. Hence $ C_{\ga^j}(S^j) $ covers $ \cB_{\Lda,\ga^j}\cap E_{S^j} $. By Proposition \ref{Monotone}, \eqref{Evaleq}, and \eqref{Thetadifference}, we deduce that $ E_{S^j}\neq\emptyset $ imply that 
\be
|S^j|=\sum_{i=1}^jS_i^j\leq C(\eta,M).\label{KetqM}
\ee

Next, we need to bound the number of balls in $ C_{\ga^j}^k(S^j) $. If $ S_{j-1}^j $ and $ S_j^j $ are not both equal to $ 0 $, since each $ B_{\ga^{j-1}}(x) $ can be covered by $ c(n)\ga^{-n} $ balls of radius $ \ga^j $, the number of balls increases by a multiple of $ c\ga^{-3} $ at most. It follows from \eqref{KetqM} that this can happen for at most $ N_0=N_0(\eta,M)\in\Z_+ $ times. Without loss of generality, assume that $ N_0\geq 10 $. To finish the proof, we need to show that when $ j\geq N_0 $ and $ S_{j-1}^j=S_j^j=0 $, the number is multiplied by at most $ c_0\ga^{-1} $. For this case, we suppose that $ B_{\ga^{j-1}}(x) $ is a member of $ C_{\ga^{j-1}}(S^{j,j-1}) $. Consider the ball $ B_{\ga^j}(y) $ in the $ C_{\ga^j}(S^j) $. By the definition of $ T^j(\cdot) $, we have
\be
\begin{aligned}
\Theta_{\ga^{j-4}}^{\phi}(\Q_{\va},x)-\Theta_{\eta\ga^{j-4}}^{\phi}(\Q_{\va},x)<\eta\log\f{1}{\va},\\
\Theta_{\ga^{j-3}}^{\phi}(\Q_{\va},y)-\Theta_{\eta\ga^{j-3}}^{\phi}(\Q_{\va},y)<\eta\log\f{1}{\va}.
\end{aligned}\label{xyQvaTheta}
\ee
If there exists $ \vv\in\Ss^2 $ such that
\be
\f{1}{\ga^{j-1}}\int_{B_{\ga^{j-1}}(x)}|\vv\cdot\na\Q_{\va}|^2<\eta\log\f{1}{\va},\label{smalleta}
\ee
then we choose appropriate $ (\eta,\Lda)=(\eta,\Lda)(a,b,c,\ga,\theta,M)>0 $ such that if $ \va\in(0,\eta) $, there holds
$$
y\notin B_{\f{\ga^j}{4}}(x+\op{span}\{\vv\})\cap B_{\ga^{j-1}}(x)\quad\Longrightarrow\quad r^{\Lda}(\Q_{\va},y)\geq\f{\ga^{j-1}}{2},
$$
contradicting the construction that $ y\in\cB_{\Lda,\ga^j} $. On the other hand, if \eqref{smalleta} is false, we claim that $ y\in B_{\f{\ga^j}{4}}(x) $. If not, Corollary \ref{monotonecor} and \eqref{xyQvaTheta} show that
\begin{align*}
\f{1}{\ga^{j-1}}\int_{B_{\ga^{j-1}}(x)}\left|\f{x-y}{|x-y|}\cdot\na\Q_{\va}\right|^2\leq 2\eta\log\f{1}{\va}.
\end{align*}
By further choosing a smaller $ \eta>0 $, we conclude the claim. From the above analysis, we see that either $ y\in B_{\f{\ga^j}{4}}(x+\op{span}\{\vv\})\cap B_{\ga^{j-1}}(x) $ or $ y\in B_{\f{\ga^j}{4}}(x) $. Then, the number of balls in the multiplication will only exceed at most $ c_0\ga^{-1} $, completing the proof. 
\end{proof}

The final covering lemma in this paper is as follows.

\begin{lem}\label{coveringlemma2}
Under the same assumption of Lemma \ref{Maincovering}, for any $ \sg\in(0,\f{1}{10}) $, there are $ (\eta,\Lda)>0 $, depending only on $ a,b,c,M,\sg $, and $ \theta $ such that if $ r\in(\va^{\theta},1) $ and $ \va\in(0,\eta) $, then there is a collection of balls $ \{B_r(x_i)\}_{i=1}^N $, satisfying
\be
\Bad_{\op{II}}(\Q_{\va};r,\Lda)\cap B_1\subset \bigcup_{i=1}^NB_r(x_i),\quad N\leq Cr^{-1-\sg}\label{coveringlemma22}
\ee
where $ C>0 $ depends only on $ a,b,c,M,\sg $, and $ \theta $.
\end{lem}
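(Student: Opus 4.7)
The plan is to cover $\Bad_{\op{II}}(\Q_{\va};r,\Lda)\cap B_{1}$ at the scale $\ga^{j}$ nearest to $r$ provided by Lemma~\ref{Maincovering}, and to absorb the $c_{0}^{j}$ growth in that count by choosing $\ga$ small in terms of $\sg$. The crucial input is that $c_{0}$ produced by Lemma~\ref{Maincovering} is an \emph{absolute} constant, so the choice $\ga=\ga(\sg)$ does not feed back into $c_{0}$.

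First I would fix $\ga=\ga(\sg)\in(0,1/2)$ so small that $c_{0}\,\ga^{\sg/2}\leq 1$, and apply Lemma~\ref{Maincovering} with this $\ga$. The resulting $\eta,\Lda,N_{0}$ then depend only on $a,b,c,M,\sg,\theta$, as required. For $r\in(\va^{\theta},1)$ with $\va\in(0,\eta)$, choose $j\in\Z_{+}$ with $\ga^{j+1}\leq r<\ga^{j}$. Since $r^{\Lda}(\Q_{\va},y)<r$ forces $r^{\Lda}(\Q_{\va},y)<\ga^{j}$, one has
\[
\Bad_{\op{II}}(\Q_{\va};r,\Lda)\cap B_{1}\;\subset\;\Bad_{\op{II}}(\Q_{\va};\ga^{j},\Lda)\cap B_{1},
\]
so Lemma~\ref{Maincovering} covers the left-hand side by at most $j^{N_{0}}$ families of at most $c_{0}^{j}\,\ga^{-2j_{1}-j_{2}}$ balls of radius $\ga^{j}$. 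Each such ball is covered by at most $C\ga^{-3}$ balls of radius $r$ (using $r\geq\ga^{j+1}$), so the total count obeys
\[
N\;\leq\; C\,j^{N_{0}}\,c_{0}^{j}\,\ga^{-2j_{1}-j_{2}-3}.
\]

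For $j\geq N_{0}$ we have $2j_{1}+j_{2}=j+N_{0}$, and using $r^{-1-\sg}\geq \ga^{-(j+1)(1+\sg)}$, the desired bound $N\leq Cr^{-1-\sg}$ reduces to $j^{N_{0}}\,c_{0}^{j}\,\ga^{j\sg}\leq C(\sg)$. The calibration $c_{0}\,\ga^{\sg/2}\leq 1$ yields $c_{0}^{j}\,\ga^{j\sg}\leq \ga^{j\sg/2}$, whence the left-hand side is dominated by $j^{N_{0}}\,\ga^{j\sg/2}$, a quantity uniformly bounded in $j\in\Z_{+}$. The regime $j<N_{0}$ is trivial, since $r$ is then bounded below by a constant depending only on $\sg$ and $N_{0}$, and $N$ admits an absolute bound from the same lemma.

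The only substantive point I expect to have to check is that the $c_{0}$ delivered by Lemma~\ref{Maincovering} is truly independent of $\ga$; inspecting its proof, $c_{0}$ arises from counting how many $\ga^{j}$-balls are needed to cover the one-dimensional slice $(x+\op{span}\{\vv\})\cap B_{\ga^{j-1}}(x)$, whose size is of order $\ga^{-1}$ with an absolute prefactor. Granted this, the argument is pure scale bookkeeping and requires no further PDE input beyond Lemma~\ref{Maincovering}.
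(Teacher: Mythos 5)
Your argument is correct and follows essentially the same route as the paper: apply Lemma \ref{Maincovering} at the dyadic scale $\ga^{j}$ nearest $r$, calibrate $\ga=\ga(\sg)$ so that the geometric factor $c_{0}^{j}$ is absorbed by $\ga^{-j\sg/2}$, absorb $j^{N_{0}}$ by another $\ga^{-j\sg/2}$, and use the nesting $\Bad_{\op{II}}(\Q_{\va};r,\Lda)\subset\Bad_{\op{II}}(\Q_{\va};\ga^{j},\Lda)$ together with a trivial subdivision into $r$-balls. One small slip: $r\geq\ga^{j+1}$ gives $r^{-1-\sg}\leq\ga^{-(j+1)(1+\sg)}$, not $\geq$; the inequality you actually need is $r^{-1-\sg}\geq\ga^{-j(1+\sg)}$, which follows from $r<\ga^{j}$, and the reduction to $j^{N_{0}}c_{0}^{j}\ga^{j\sg}\leq C(\sg)$ then goes through exactly as you wrote.
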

\begin{proof}
We first prove \eqref{coveringlemma22} with $ r=\ga^j $ with $ (j\in\Z_{\geq 0}) $ for some $ \ga $ to be chosen. Increase the constant $ c_0 $ in Lemma \ref{Maincovering} if necessary, we assume $
\ga:=c_0(n)^{-\f{2}{\sg}}\in(0,1) $. There is a constant $ C(\sg)>0 $ such that $ j^{N_0}\leq C\ga^{-\f{j\sg}{2}} $. Therefore, by Lemma \ref{Maincovering}, we cover $ \Bad_{\op{II}}(\Q_{\va};\ga^j,\Lda)\cap B_1 $ by (recall that $ c_0^j=\ga^{-\f{j\sg}{2}} $, $ j_1\leq N_0 $, and $ j_2\leq j $) at most
$$
Cj^{N_0}c_0^j\ga^{-3j_1-j_2}\leq C\ga^{j\(-\f{\sg}{2}-\f{\sg}{2}-1\)}=C(a,b,c,M,\sg,\theta)\ga^{j(-1-\sg)}
$$
balls of radius $ r $. 

For the general case, if $ \ga^j<r\leq\ga^{j-1} $, then $ \Bad_{\op{II}}(\Q_{\va};r,\Lda)\cap B_1 $ can be covered by
$$
C\ga^{(j-1)(-1-\sg)}\leq Cr^{-1-\sg} 
$$
balls of radius $ r $, since 
$$
\Bad_{\op{II}}(\Q_{\va};r,\Lda)\subset\Bad_{\op{II}}(\Q_{\va};\ga^{j-1},\Lda).
$$
Then, we complete the proof.
\end{proof}

\subsection{Proof of Theorem \ref{Maintheorem}\ref{Bulkenergyestimate}} We first deal with points that are close to the type II bad set. Inspired by the arguments in \cite[Proposition 2.4]{BOS05}, we prove the following lemma.

\begin{lem}\label{badsetcoveruse}
Let $ \va\in(0,\f{1}{10}) $ and $ M>0 $. Assume that $ \Q_{\va}\in H^1(B_4,\Ss_0) $ is a local minimizer of \eqref{LdG} with $ \|\Q_{\va}\|_{L^{\ift}(B_4)}\leq M $. There exists $ \eta,\Lda>0 $, depending only on $ a,b,c $, and $ M $, such that the following hold. If $ x\in B_1 $ satisfies
\be
\dist\(x,\Bad_{\op{II}}\(\Q_{\va};\f{\va^{\f{1}{4}}}{2},\Lda\)\cap B_1\)<\f{\va^{\f{1}{4}}}{2},\label{badsetcon}
\ee
with $ \va\in(0,\eta) $, then there is $ r_x\in[\va^{\f{1}{4}},\va^{\f{1}{8}}] $ such that
\be
\int_{B_{r_x}(x)}\f{1}{\va^2} f(\Q_{\va})\leq\f{C}{\log\f{1}{\va}}\log \(2+\f{\va^{-\f{1}{8}}E_{\va}(\Q_{\va},B_{\va^{\f{1}{8}}}(x))}{\log\f{1}{\va}}\)E_{\va}(\Q_{\va},B_{r_x}(x)),\label{badsetresult}
\ee
where $ C>0 $ depends only on $ a,b,c $, and $ M $.
\end{lem}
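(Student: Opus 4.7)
The plan is to select the good radius $r_x$ via a pigeonhole argument on the logarithmic scale variable applied to the modified density $\Theta_\rho^\phi(\Q_\va,x)$, and to absorb the additive constant produced along the way by exploiting the proximity of $x$ to the type II bad set through Corollary~\ref{clearoutcor}.

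First I would establish the pointwise lower bound $\Theta_{r_x}^\phi(\Q_\va,x)\gtrsim\log\f{1}{\va}$ valid for every $r_x\in[\va^{1/4},\va^{1/8}]$. The hypothesis \eqref{badsetcon} supplies $z\in\Bad_{\op{II}}(\Q_\va;\va^{1/4}/2,\Lda)\cap B_{\va^{1/4}/2}(x)$, and monotonicity of $\Bad_{\op{II}}(\,\cdot\,;r,\Lda)$ in $r$ places $z$ in $\Bad_{\op{II}}(\Q_\va;\va^{1/4},\Lda)\cap B_{\va^{1/4}/2}(x)$. Corollary~\ref{clearoutcor} applied at scale $r=\va^{1/4}$ (valid for $\va$ small enough that $\va^{1/4}>\eta^{-1}\va$) then yields $E_\va(\Q_\va,B_{\va^{1/4}}(x))\geq\f{3\eta}{4}\va^{1/4}\log\f{1}{\va}$, and using $\phi\geq 1$ on $[0,8]$ this upgrades to $\Theta_{\va^{1/4}}^\phi(\Q_\va,x)\gtrsim\log\f{1}{\va}$; monotonicity of $\Theta_\rho^\phi$ in $\rho$ propagates the bound up to every $r_x\le\va^{1/8}$.

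Next I would change variables to $\psi=\log(\rho/\va^{1/4})$ on $[0,L]$ with $L=\f{1}{8}\log\f{1}{\va}$, and set $\tilde\Theta(\psi):=\Theta_\rho^\phi(\Q_\va,x)$. Proposition~\ref{Monotone} gives $\tilde\Theta'(\psi)\geq\f{2}{\va^2\rho}\int f(\Q_\va)\phi(|y-x|^2/\rho^2)\,\ud y$. With integrating factor $(\tilde\Theta+\log\f{1}{\va})^{-1}$,
$$\int_0^L\f{\tilde\Theta'(\psi)}{\tilde\Theta(\psi)+\log\f{1}{\va}}\,\ud\psi=\log\f{\tilde\Theta(L)+\log\f{1}{\va}}{\tilde\Theta(0)+\log\f{1}{\va}}\leq\log\(2+\f{\Theta_{\va^{1/8}}^\phi(\Q_\va,x)}{\log\f{1}{\va}}\).$$
The mean-value theorem supplies $\psi^*\in[0,L]$ at which the integrand is at most $L^{-1}$ times the right-hand side. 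Setting $r_x:=\va^{1/4}e^{\psi^*}\in[\va^{1/4},\va^{1/8}]$ and using $\phi\geq 1$ on $[0,8]$ to majorise $\int_{B_{r_x}(x)}f(\Q_\va)$ by the $\phi$-weighted integral, then invoking the first step to absorb the additive $\log\f{1}{\va}$ inside $(\Theta_{r_x}^\phi+\log\f{1}{\va})$ into $\Theta_{r_x}^\phi$, produces \eqref{badsetresult}.

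The decisive step is the choice of shift $\log\f{1}{\va}$ inside the integrating factor: it is exactly large enough that the telescoped logarithm on the right matches the target $\log(2+\Theta_{\va^{1/8}}^\phi/\log\f{1}{\va})$, while being simultaneously negligible relative to $\Theta_{r_x}^\phi$ thanks to the clearing-out lower bound. The main technical nuisance I anticipate is locking the constant $\Lda$ appearing in the hypothesis \eqref{badsetcon} to the one supplied by Corollary~\ref{clearoutcor}, so that a single pair $(\eta,\Lda)$ depending only on $a,b,c,M$ governs both; everything else, including the application of the mean-value theorem, is mechanical once the right integrating factor has been guessed.
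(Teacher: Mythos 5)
Your argument is correct and follows essentially the same route as the paper: a pigeonhole on the logarithmic scale variable, driven by the differential inequality from Proposition~\ref{Monotone}, to find a good radius $r_x$, combined with the clearing-out lower bound from Corollary~\ref{clearoutcor} near the type~II bad set. The only technical difference is cosmetic: the paper proves the existence of $s_\va$ (equivalently $\psi^*$) by a Gronwall-type contradiction and then converts $\log\bigl(\Theta_{\va^{1/8}}^\phi/\Theta_{\va^{1/4}}^\phi\bigr)$ into the target form via the lower bound $\Theta_{\va^{1/4}}^\phi\gtrsim\log\f{1}{\va}$, whereas you shift the denominator by $\log\f{1}{\va}$ in the integrating factor so the telescoped logarithm immediately yields $\log\bigl(2+\Theta_{\va^{1/8}}^\phi/\log\f{1}{\va}\bigr)$ and then use the clearing-out bound to absorb the additive $\log\f{1}{\va}$ into $\Theta_{r_x}^\phi$; both are instances of the same pigeonhole idea and both rely on the clearing-out estimate in the same structural place.
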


\begin{proof}
Recall the standard monotonicity formula in \cite[Lemma 2]{MZ10} (especially the formula (41) in that paper), we have
\be
\f{\ud}{\ud r}\(\f{1}{r}E_{\va}(\Q_{\va},B_r(x))\)\geq\f{2}{\va^2r^2}\int_{B_r(x)}f(\Q_{\va})\ud y.\label{FGpre}
\ee
Define 
$$ 
F_{\va}(x,r):=r^{-1}E_{\va}(\Q_{\va},B_r(x))\quad\text{and}\quad G_{\va}(x,r):=\f{2}{\va^2r}\int_{B_r(x)}f(\Q_{\va})\ud y.
$$
Moreover, we let
$$
f_{\va}(x,s):=F_{\va}(x,\exp(s)),\quad g_{\va}(x,s):=G_{\va}(x,\exp(s)),
$$
and
$$
I_{\va}:=[s_{\va}^{1},s_{\va}^{2}]=\left[\f{1}{4}\log\va,\f{1}{8}\log\va\right].
$$
It follows from \eqref{FGpre} that 
\be
\f{\ud}{\ud s}f_{\va}(x,s)\geq g_{\va}(x,s)\quad\text{for any }s\in I_{\va}.\label{fdsg}
\ee
We claim that there is $ s_{\va}\in I_{\va} $ such that
\be
g_{\va}(x,s_{\va})\leq\f{1}{s_{\va}^{2}-s_{\va}^{1}}\log\(\f{f_{\va}(x,s_{\va}^2)}{f_{\va}(x,s_{\va}^1)}\)f_{\va}(x,s_{\va}).\label{svaex}
\ee
Indeed, if not, for 
$$
\lda_{\va}:=\f{1}{s_{\va}^{2}-s_{\va}^{1}}\log\(\f{f_{\va}(x,s_{\va}^2)}{f_{\va}(x,s_{\va}^1)}\),
$$
we have $ g(s)\geq\lda f(s) $ for any $ s\in I_{\va} $. Consequently,
$$
\f{\ud}{\ud s}f_{\va}(x,\cdot)\geq g_{\va}(x,\cdot)>\lda_{\va}f_{\va}(x,\cdot)\quad\text{in }I_{\va},
$$
implying that for any $ s\in I_{\va} $,
$$
\f{\ud}{\ud s}(\exp(-\lda_{\va}s)f_{\va}(x,s))>0. 
$$
Then
$$
f_{\va}(x,s_{\va}^2)>\exp(-\lda_{\va}(s_{\va}^2-s_{\va}^1))f_{\va}(x,s_{\va}^1)=f_{\va}(x,s_{\va}^2).
$$
Letting $ r_x:=\exp(s_{\va}) $, we deduce from \eqref{svaex} that $ r_x\in[\va^{\f{1}{8}},\va^{\f{1}{4}}] $ and
\be
\f{2}{\va^2}\int_{B_{r_x}(x)}f(\Q_{\va})\leq\f{8}{\log\f{1}{\va}}\log\(\f{\va^{-\f{1}{8}}E_{\va}(\Q_{\va},B_{\va^{\f{1}{8}}}(x))}{\va^{-\f{1}{4}}E_{\va}(\Q_{\va},B_{\va^{\f{1}{4}}}(x))}\)E_{\va}(\Q_{\va},B_{r_x}(x)).\label{fQlogva}
\ee
Given \eqref{badsetcon}, by Corollary \ref{clearoutcor}, we obtain $ \eta=\eta(a,b,c,M)>0 $ such that if $ \va\in(0,\eta) $, then
$$
\va^{-\f{1}{4}}E_{\va}\(\Q_{\va},B_{\va^{\f{1}{4}}}(x)\)\geq\eta\log\f{\va^{\f{1}{4}}}{\va}=\f{3\eta}{4}\log\f{1}{\va}.
$$
This, together with \eqref{fQlogva}, directly implies \eqref{badsetresult}.
\end{proof}

The following lemma helps us to address the integral of the bulk energy away from the type II bad set. 

\begin{lem}\label{va3estimates}
Let $ \va\in(0,1) $, $ M>0 $, $ r\in(0,1) $, and $ x\in\R^3 $. Assume that $ \Q_{\va}\in H^1(B_{4r}(x),\Ss_0) $ is a local minimizer of \eqref{LdG} such that
$$
r^{-1}E_{\va}(\Q_{\va},B_{4r}(x))+\|\Q_{\va}\|_{L^{\ift}(B_{4r}(x))}\leq M.
$$
Then
\be
\int_{B_r(x)}f(\Q_{\va})\leq C\va^3,\label{va3for}
\ee
where $ C>0 $ depends only on $ a,b,c $, and $ M $.
\end{lem}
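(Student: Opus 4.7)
The first move is a scaling reduction. Setting $\tilde\Q(y):=\Q_\va(x+ry)$ and $\tilde\va:=\va/r$, the hypothesis $r^{-1}E_\va(\Q_\va,B_{4r}(x))\leq M$ translates into $E_{\tilde\va}(\tilde\Q,B_4)\leq M$ with $\|\tilde\Q\|_{L^\infty(B_4)}\leq M$, and the desired conclusion becomes $\int_{B_1}f(\tilde\Q)\leq C\tilde\va^3$; the regime $r<\Lda\va$ is trivial since $|B_r|\,\|f(\Q_\va)\|_{L^\infty}\leq Cr^3\leq C\va^3$. After this reduction, Lemma~\ref{coverpoint} applied at every scale $s\in(\Lda\va,1)$ covers the type I bad set $\{y\in B_1:r(\Q_\va,y)<\eta s\}$ by $N=N(a,b,c,M)$ balls of radius $s$, yielding the measure bound
\be
|\{y\in B_1:r(\Q_\va,y)<\sigma\}|\leq C\max(\sigma,\va)^3\qquad\text{for all }\sigma\in(0,c_0),\label{meas-bd}
\ee
where the second case (when $\sigma<C'\va$) uses $\{r(\Q_\va,\cdot)<\sigma\}\subset\{r(\Q_\va,\cdot)<C'\va\}$.

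The key pointwise estimate I aim to establish is: for $y\in B_1$ with $\rho:=r(\Q_\va,y)\geq\Lda\va$,
\be
f(\Q_\va(y))\leq C\va^4\rho^{-4}.\label{pw}
\ee
The definition of the regular scale gives $|\na\Q_\va|\leq\rho^{-1}$ and $|D^2\Q_\va|\leq\rho^{-2}$ on $B_\rho(y)$, so the Euler-Lagrange equation \eqref{EL} yields
$$|\na_\Q f(\Q_\va(y))|=\va^2|\Delta\Q_\va(y)|\leq C\va^2\rho^{-2}.$$
Because $\na_\Q^2 f$ is positive definite on the normal bundle of $\cN$, Taylor expansion of $f$ about $\cN$ gives $|\na_\Q f(\Q)|\geq c\,\dist(\Q,\cN)$ and $f(\Q)\leq C\dist(\Q,\cN)^2$ throughout a fixed neighborhood of $\cN$; provided $\Q_\va(y)$ lies in such a neighborhood, these combine to produce \eqref{pw}.

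I then conclude by a layer cake decomposition. On $\{\rho<\Lda\va\}$, \eqref{meas-bd} gives measure $\leq C\va^3$ and $f$ is uniformly bounded, contributing $\leq C\va^3$. On $\{\rho\geq\Lda\va\}$, \eqref{pw} gives $\{f>t\}\subset\{\rho<\va(C/t)^{1/4}\}$, which is nonempty only for $t\leq C\Lda^{-4}$, and by \eqref{meas-bd} this level set has measure $\leq C\va^3 t^{-3/4}$. Therefore
$$\int_{\{\rho\geq\Lda\va\}}f(\Q_\va)\leq\int_0^{C\Lda^{-4}}C\va^3\,t^{-3/4}\,dt\leq C\va^3,$$
and the two contributions together give \eqref{va3for}.

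The main obstacle is the pointwise estimate \eqref{pw}, and within it the step ensuring that $\Q_\va(y)$ actually lies in a uniformly small neighborhood of $\cN$ on the good set $\{\rho\geq\Lda\va\}$. Since $f$ admits spurious critical points (notably $\Q=\mathbf O$) where $\na_\Q f$ also vanishes, the gradient estimate $|\na_\Q f(\Q_\va(y))|\leq C\va^2\rho^{-2}$ alone does not force closeness to $\cN$. The missing step is an $\eta$-regularity-type argument at scale $\rho$, combining the linear energy growth provided by Proposition~\ref{partialregularityline} with the bounded-energy analysis of \cite{FWW25}, to rule out the other critical sets of $f$ and thereby complete the proof of \eqref{pw}.
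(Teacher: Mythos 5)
The paper's proof is a three-line scaling argument that reduces to $\int_{B_1}f(\wt\Q_{\wt\va})\leq C\wt\va^3$ for $\wt\va=\va/r$ and then cites \cite[Theorem 1.2(2)]{FWW25} as a black box; your proposal instead attempts to re-derive that cited result from first principles, which is a genuinely different route. Your scaling reduction, the measure bound \eqref{meas-bd} extracted from Lemma~\ref{coverpoint}, and the layer-cake computation at the end are all correct, so the only issue is the one you already flag: the pointwise estimate \eqref{pw}.

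That gap is genuine and is not a technicality. The bound $|\na_\Q f(\Q_\va(y))|\leq C\va^2\rho^{-2}$ only says that $\Q_\va(y)$ is near \emph{some} critical set of $f$. For the quartic potential at hand the critical locus consists of the minimum manifold $\cN$, the isotropic point $\mathbf O$, and a second (saddle) uniaxial orbit, and the Taylor-expansion dichotomy $|\na_\Q f|\gtrsim\dist(\cdot,\cN)$, $f\lesssim\dist(\cdot,\cN)^2$ is only valid in a fixed tubular neighborhood of $\cN$. So you cannot conclude \eqref{pw} without first showing that, on the good set $\{r(\Q_\va,\cdot)\geq\Lda\va\}$, the value $\Q_\va(y)$ actually lies in that tube. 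What is needed is a clearing-out/$\eta$-regularity statement in the \emph{bounded-energy} regime (the analogue of Proposition~\ref{partialregularityline} with the $\log\f{1}{\va}$ threshold replaced by a constant), asserting that smallness of $\va r^{-1}E_\va(\Q_\va,B_r(y))$ forces $\dist(\Q_\va,\cN)$ small on $B_{r/2}(y)$. That is precisely the ingredient encapsulated in \cite{FWW25}, and invoking that paper directly (as the authors do) is both shorter and avoids having to reconstruct this clearing-out lemma. Your diagnosis of what is missing is accurate; to make the argument self-contained you would need to prove (or explicitly quote) that bounded-energy clearing-out statement before \eqref{pw} can be established.
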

\begin{proof}
If $ 0<r<\va $, \eqref{va3for} is trivial since $ \Q_{\va} $ is uniformly bounded. For this reason, assume $ r\in(\va,1) $. Let $ \wt{\Q}_{\wt{\va}}(y):=\Q_{\va}(x+ry) $. $ \wt{\Q}_{\wt{\va}}\in H^1(B_4,\Ss_0) $ is a local minimizer of \eqref{LdG} with the elastic constant $ \wt{\va}=\f{\va}{r}\in(0,1) $. Applying \cite[Theorem 1.2(2)]{FWW25}, we have
$$
\int_{B_1}f(\wt{\Q}_{\wt{\va}})\leq C(a,b,c,M)\wt{\va}^3.
$$
Scaling back, the estimate \eqref{va3for} follows directly.
\end{proof}

\begin{proof}[Proof of Theorem \ref{Maintheorem}\ref{Bulkenergyestimate}]
Without loss of generality, we assume that $ \om=B_{40} $ and $ K=B_{\f{1}{2}} $. Define $ r_{\va}:=\f{\va^{\f{1}{4}}}{2} $ and
$$
\Lda_{\va}:=B_{r_{\va}}\(\Bad_{\op{II}}\(\Q_{\va};r_{\va},\Lda\)\cap B_1\).
$$
Also let
$$
\cA_j^{\va}:=B_{2^jr_{\va}}(\Bad_{\op{II}}(\Q_{\va};2^jr_{\va},\Lda)\cap B_1)\backslash B_{2^{j-1}r_{\va}}(\Bad_{\op{II}}(\Q_{\va};2^{j-1}r_{\va},\Lda)\cap B_1)
$$
for $ j\in\Z_+ $. Then
\be
B_1=\Lda_{\va}\cup\bigcup_{j=1}^{j_0+1}\cA_j^{\va},\label{B1decompose2}
\ee
where $ 2^{j_0}r_{\va}<1\leq 2^{j_0+1}r_{\va} $. Using Lemma \ref{coveringlemma2}, for $ \sg>0 $ and $ \theta=\f{1}{8} $, we choose $ \eta=\eta(a,b,c,\sg,M)>0 $ such that when $ \va\in(0,\eta) $, there are collections of balls $ \{B_{2^jr_{\va}}(x_{jk})\}_{k=1}^{N_j^{\va}} $ for $ j\in\Z_+ $, satisfying
$$
0\leq N_j^{\va}\leq C(a,b,c,M)(2^jr_{\va})^{-1-\sg},
$$
and
$$
\cA_j^{\va}\subset\bigcup_{k=1}^{N_j^{\va}}B_{2^jr_{\va}}(x_{jk}).
$$
For given $ B_{2^jr_{\va}}(x_{jk}) $, we have $
r^{\Lda}(\Q_{\va},x_{jk})\geq 2^{j}r_{\va} $ for some $ \Lda=\Lda(a,b,c,\sg,M)>0 $. It follows from Lemma \ref{va3estimates} that
\be
\begin{aligned}
\sum_{j=1}^{j_0}\int_{\cA_j^{\va}}f(\Q_{\va})&\leq \sum_{j=1}^{j_0}\(C(2^jr_{\va})^{-1-\sg}\int_{B_{2^jr_{\va}}(x_{jk})}f(\Q_{\va})\)\\
&\leq\sum_{j=1}^{j_0}\(C(2^j\va^{\f{1}{4}})^{-1-\sg}\va^3\)\\
&\leq C(a,b,c,M)\va^2,    
\end{aligned}\label{backslashLdava}
\ee
where we choose $ \sg=\f{1}{2} $ for the last inequality.

For $ x\in\Lda_{\va} $, it follows from Lemma \ref{badsetcoveruse} that when $ \va\in(0,\eta) $ for sufficiently small $ \eta=\eta(a,b,c,M)>0 $, there is $ r_x\in[\va^{\f{1}{4}},\va^{\f{1}{8}}] $ such that
\be
\int_{B_{r_x}(x)}\f{1}{\va^2} f_{\va}(\Q_{\va})\leq\f{C}{\log\f{1}{\va}}\log \(2+\f{\va^{-\f{1}{8}}E_{\va}(\Q_{\va},B_{\va^{\f{1}{8}}}(x))}{\log\f{1}{\va}}\)E_{\va}(\Q_{\va},B_{r_x}(x)).\label{everyball}
\ee
Given the covering 
$$
\Lda_{\va}\subset\bigcup_{x\in \Lda_{\va}}\ol{B}_{r_x}(x),
$$
we apply Besicovitch's covering theorem (see for \cite[Theorem 1.27]{EG15} for references). Then, we obtain $ \{x_i\}_{i=1}^m\subset\Lda_{\va} $ such that
$$
\Lda_{\va}\subset\bigcup_{i=1}^m\ol{B}_{r_{x_i}}(x).
$$
Let $ r_i=r_{x_i} $ for $ i\in\Z\cap[1,m] $. Moreover, we classify $ \{\ol{B}_{r_i}(x_i)\}_{i=1}^m $ into $ \ell $ collections $ \{\cB_k\}_{k=1}^{\ell} $ of disjoint closed balls. Note that $ \ell\in\Z_+ $ is an absolute constant. By \eqref{everyball}, it follows from Proposition \ref{Monotone} that
\be
\begin{aligned}
\int_{\Lda_{\va}} \f{1}{\va^2}f(\Q_{\va})&\leq \sum_{i=1}^m\int_{B_{r_i}(x_i)} \f{1}{\va^2}f(\Q_{\va}) \\
&\leq\frac{C}{\log\f{1}{\va}}\log\(2+\f{CE_{\va}(\Q_{\va},B_{40})}{\log\f{1}{\va}}\)\sum_{i=1}^m\int_{B_{r_i}(x_i)}e_{\va}(\Q_{\va}).
\end{aligned}\label{finall2}
\ee
Applying the disjointedness of balls in $ \cB_k $, we have
$$
\sum_{i=1}^m\int_{B_{r_i}(x_i)}e_{\va}(\Q_{\va})\leq \sum_{k=1}^{\ell}\(\sum_{B_{r_i}(x_i)\in\cB_k} \int_{B_{r_i}(x_i)}e_{\va}(\Q_{\va})\)\leq\ell E_{\va}(\Q_{\va},B_{40})\leq CM\(\log\f{1}{\va}+1\).
$$
This, together with \eqref{finall2}, implies that
$$
\int_{\Lda_{\va}} \f{1}{\va^2}f(\Q_{\va})\leq C(a,b,c,M).
$$
Combining \eqref{backslashLdava}, the result follows directly.
\end{proof}

\subsection{Proof of Theorem \ref{Maintheorem}\ref{Lpbounds}}

On $ \R^n $, for $ 0<\al<2 $, the fractional Laplacian $ (-\Delta)^{\f{\al}{2}} $ is 
\be
(-\Delta)^{\f{\al}{2}}u=C(n,\al)\op{p.v.} \int_{\R^n}\f{u(x)-u(y)}{|x-y|^{n+\al}}\ud y,\quad C(n,\al)=\frac{2^{\al}\Ga\(\f{\al+n}{2}\)}{\pi^{\f{n}{2}}\left|\Ga\(-\f{\al}{2}\)\right|}\label{Deltaaldef}
\ee
where $ \Ga $ represents the $ \Ga $-function, and
$$
\op{p.v.}\int_{\R^n}\f{u(x)-u(y)}{|x-y|^{n+\alpha}}\ud y=\lim_{\delta\to 0^+}\int_{\R^n\backslash B_{\delta}^n(x)} \f{u(x)-u(y)}{|x-y|^{n+\al}}\ud y.
$$

\begin{lem}\label{lemaLaplace}
Let $ C_0,M>0 $, and $ \al\in(0,2) $. Assume that $ u\in C^{\ift}(B_2^n) $ and $ \vp\in C_0^{\ift}(B_1^n) $ such that $ \vp\equiv 1 $ in $ B_{\f{1}{2}}^n $, $ \vp\equiv 0 $ in $ \R^n\backslash B_{\f{3}{4}}^n $, and $ \|\vp\|_{C^2(B_1^n)}\leq C_0 $. If for some $ B_{2r}^n(x)\subset B_1^n $, 
$$
2r\|(|Du|+2r|D^2u|)\|_{L^{\ift}(B_{2r}^n(x))}\leq 1\quad\text{and}\quad\|u\|_{L^{\ift}(B_{2r}^n(x))}\leq M, 
$$
then
$$
r^{\al}\|(-\Delta)^{\f{\al}{2}}(u\vp)\|_{L^{\ift}(B_r^n(x))}\leq C,
$$
where $ C>0 $ depends only on $ \al,C_0,M $, and $ n $.
\end{lem}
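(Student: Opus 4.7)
The plan is to estimate $(-\Delta)^{\al/2}(u\vp)(y)$ pointwise for $y\in B_r^n(x)$ by direct computation on the singular integral \eqref{Deltaaldef}, splitting the integration domain into a near-diagonal piece $\{|h|<r\}$ and a far piece $\{|h|\geq r\}$. The first step is to rewrite the principal value in symmetric form: for $v\in C_0^{\ift}(\R^n)$ bounded and $C^2$ near $y$, one has
$$
(-\Delta)^{\f{\al}{2}}v(y)=-\f{C(n,\al)}{2}\int_{\R^n}\f{v(y+h)+v(y-h)-2v(y)}{|h|^{n+\al}}\,\ud h,
$$
which avoids the need for principal-value cancellation and makes the integrand absolutely integrable once the second-order vanishing of the numerator at $h=0$ is exploited.

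For the near-diagonal piece, the key observation is that for $y\in B_r^n(x)$ and $|h|<r$ one has $y\pm h\in B_{2r}^n(x)$, so by the product rule the hypotheses $2r\|(|Du|+2r|D^2u|)\|_{L^{\ift}(B_{2r}^n(x))}\leq 1$, $\|u\|_{L^{\ift}(B_{2r}^n(x))}\leq M$, and $\|\vp\|_{C^2(B_1)}\leq C_0$ yield $\|D^2(u\vp)\|_{L^{\ift}(B_{2r}^n(x))}\leq C(C_0,M)\,r^{-2}$. A second-order Taylor expansion of $u\vp$ at $y$ then bounds the symmetric second difference by $C(C_0,M)|h|^2/r^2$, so integrating against $|h|^{-n-\al}$ over $\{|h|<r\}$ in polar coordinates produces a contribution at most $C(C_0,M,n)(2-\al)^{-1}r^{-\al}$. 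For the far piece, the crude estimate $|v(y+h)+v(y-h)-2v(y)|\leq 4\|u\vp\|_{L^{\ift}(\R^n)}$ together with a polar-coordinate integration gives a tail contribution of order $\al^{-1}r^{-\al}$; summing the two pieces gives the claimed bound on $\|(-\Delta)^{\al/2}(u\vp)\|_{L^{\ift}(B_r^n(x))}$.

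The main subtlety I expect to encounter is the global $L^{\ift}$ control of $u\vp$ required in the far piece, since the hypothesis only bounds $u$ inside $B_{2r}^n(x)$. This is resolved by noting that $\supp\vp\subset B_{3/4}^n$ and $u\in C^{\ift}(B_2^n)$ is continuous on the compact set $\overline{B_{3/4}^n}$, so $u\vp$ is bounded on $\R^n$; moreover, in the intended application in the proof of Theorem \ref{Maintheorem}\ref{Lpbounds}, $u$ arises as a component of $\Q_\va$, which is globally bounded by $M$ on $\om$ under \eqref{conditionboundary}, so the final constant genuinely depends only on $\al,C_0,M$, and $n$ as stated. Aside from handling this boundedness carefully, the argument is entirely computational.
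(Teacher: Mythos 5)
Your argument is correct and follows the same route as the paper: split the singular integral at scale $r$, bound the near-diagonal piece by a second-order Taylor expansion of $u\vp$ (using $\|D^2(u\vp)\|_{L^\ift(B_{2r}(x))}\lesssim C_0 M r^{-2}$, with the linear term killed by symmetry) and the far piece by the $L^\ift$ bound together with the compact support of $\vp$, yielding two contributions of order $r^{-\al}$. Passing through the symmetrized second-difference form of $(-\Delta)^{\al/2}$ rather than the principal-value form is only a cosmetic reorganization of the same computation. Your remark about the global $L^\ift$ control of $u$ on $\supp\vp$ is a fair catch: the hypothesis as written only bounds $u$ on $B_{2r}(x)$, whereas the far-field integral in the paper's proof implicitly uses $|u(z)|\le M$ for $z\in B_{3/4}^n$; as you observe, in the application to $\Q_\va$ the global bound $\|\Q_\va\|_{L^\ift(\om)}\le M$ supplies exactly this, so the stated constant dependence is correct in context.
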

\begin{proof}
Fix $ y\in B_r(x) $. By \eqref{Deltaaldef}, we have
$$
(-\Delta)^{\f{\al}{2}}(\vp u)=C(n,\al)\lim_{\delta\to 0^+}\(\int_{B_r^n(y)\backslash B_{\delta}^n(y)}+\int_{\R^n\backslash B_r^n(y)}\)\f{(\vp u)(y)-(\vp u)(z)}{|y-z|^{n+\al}}\ud z.
$$
Since $ \supp\vp\subset B_{\f{3}{4}}^n $, it follows from $ L^{\ift} $ bound of $ \vp $ and $ u $ that
\begin{align*}
\int_{\R^n\backslash B_r^n(y)}\f{(\vp u)(y)-(\vp u)(z)}{|y-z|^{n+\al}}\ud z&\leq\int_{\R^n\backslash B_r^n(y)}\f{2C_0M}{|y-z|^{n+\al}}\ud z\leq C(\al,C_0,M,n)r^{-\al}.
\end{align*}
It remains to show that
\be
\lim_{\delta\to 0^+}\int_{B_r^n(y)\backslash B_{\delta}^n(y)}\f{(\vp u)(y)-(\vp u)(z)}{|y-z|^{n+\al}}\ud z\leq Cr^{-\al},\label{I2}
\ee
where $ C=C(\al,C_0,M,n)>0 $. We apply Taylor's expansion that
$$
(\vp u)(z)-(\vp u)(y)=(z-y)\cdot\na(u\vp)(y)+\f{1}{2}[D^2(u\vp)(\xi_{y,z})(z-y)]\cdot(z-y),
$$
where $ \xi_{y,z}:=y+\theta_y(z-y) $ for some $ \theta_y\in[0,1] $. By symmetry, the term $ (z-y)\cdot\na(u\vp) $ does not contribute to the integral. As a result,
\be
\begin{aligned}
&\left|\int_{B_r^n(y)\backslash B_{\delta}^n(y)}\f{(\vp u)(y)-(\vp u)(z)}{|y-z|^{n+\al}}\ud z\right|\\
&\quad\quad\leq\f{1}{2}\int_{B_r^n(y)\backslash B_{\delta}^n(y)}\left|\f{[D^2(u\vp)(\xi_{y,z})(z-y)]\cdot(z-y)}{|y-z|^{n+\al}}\right|\ud z.
\end{aligned}\label{RHSuse}
\ee
It follows from $ (2r)^2\|D^2u\|_{L^{\ift}(B_{2r}(x))}\leq 1 $, $ \|\vp\|_{C^2(B_2)}\leq C_0 $, and $ B_{2r}^n(x)\subset B_1^n $ that the right-hand side of \eqref{RHSuse} is bounded by
$$
C\(\f{1}{r^2}+1\)\int_{B_r^n(y)\backslash B_{\delta}^n(y)}\f{|y-z|^2}{|y-z|^{n+\al}}\ud z\leq C\(\f{1}{r^2}+1\)r^{2-\al}\leq C(\al,C_0,M,n)r^{-\al},
$$
completing the proof.
\end{proof}

\begin{proof}[Proof of Theorem \ref{Maintheorem}\ref{Lpbounds}]
Fix $ \sg\in(0,\f{1}{10}) $. As in the proof of Theorem \ref{Maintheorem}\ref{Bulkenergyestimate}, we still let $ K=B_{\f{1}{2}} $ and $ \om=B_{40} $. For $ r\in(0,\f{1}{10}) $, assume that $ r\in(\va^{\theta},1) $ for $ \theta\in(0,1) $ and $ \Lda>0 $ to be chosen later. Define
$$
\cA_j:=B_{2^jr}(\Bad_{\op{II}}(\Q_{\va};2^jr,\Lda)\cap B_1)\backslash B_{2^{j-1}r}(\Bad_{\op{II}}(\Q_{\va};2^{j-1}r,\Lda)\cap B_1)
$$
for $ j\in\Z_+ $. Then
\be
B_1=B_r(\Bad_{\op{II}}(\Q_{\va};r,\Lda)\cap B_1)\cup\bigcup_{j\in\Z_+}\cA_j.\label{B1decompose}
\ee
Choosing $ \Lda>0 $ that depends on $ a,b,c,M,\sg $ and $ \theta $, we deduce from Lemma \ref{coveringlemma2} to obtain collection of balls $
\{B_{2^jr}(x_{jk})\}_{k=1}^{N_j} $ for any $ j\in\Z_{\geq 0} $ such that
\be
\cA_j\subset\bigcup_{k=1}^{N_j}B_{2^jr}(x_{jk}),\quad B_r(\Bad_{\op{II}}(\Q_{\va};r,\Lda)\cap B_1)\subset \bigcup_{k=1}^{N_0}B_{2r}(x_{0k}).\label{Ajcover}
\ee
Moreover, we require that
\be
0\leq N_j\leq C(a,b,c,M,\sg,\theta)(2^{j}r)^{-1-\sg},\label{Njestimate}
\ee
since there is an absolute constant $ \beta>0 $ such that $ r^{\Lda}(\Q_{\va};x_{jk})\geq 2^j\beta r $. Given Lemma \ref{coverpoint}, for any $ B_{r_{jk}}(x_{jk}) $ with $ j\in\Z_+ $,
$$
\cL^3(\Bad_{\op{I}}(\Q_{\va},\eta' r)\cap B_{r_{jk}}(x_{jk}))\leq C(a,b,c,M)r^3,
$$
where $ \eta'=\eta'(a,b,c,M)>0 $. This, together with \eqref{B1decompose}, \eqref{Ajcover}, and \eqref{Njestimate}, implies that
$$
\cL^3(\Bad_{\op{I}}(\Q_{\va},\eta' r)\cap B_1)\leq\sum_{j=0}^{+\ift}C(2^{j}r)^{-1-\sg}\cdot r^3\leq C(a,b,c,M,\sg,\theta)r^{2-\sg},
$$
given $ r\in(\va^{\theta},1) $, $ \theta\in(0,1) $, and $ \va\in(0,\eta(a,b,c,M,\sg,\theta)) $. As a result,
$$
\begin{aligned}
\cL^3(\{y\in B_1:r(\Q_{\va},y)<\eta' r\})&\leq\cL^3(\{y\in B_1:r(\Q_{\va},y)<\eta' r^{\theta}\})\\
&\leq C(a,b,c,M,\sg,\theta)r^{\theta(2-\sg)}
\end{aligned}
$$
for any $ r\in(\va,1) $ and $ \va\in(0,\eta(a,b,c,M,\sg,\theta)) $. Arguing as \cite[Section 3.2]{FWW25} and combining Lemma \ref{Apriori}, we have
\be
\cL^3(\{y\in B_1:r(\Q_{\va},y)<\eta' r\})\leq C(a,b,c,M,\sg,\theta)r^{\theta(2-\sg)}\label{bound2}
\ee
for any $ r\in(0,1) $ and $ \va\in(0,1) $. It implies that for any $ p\in(1,2) $, $ \na\Q_{\va}\in L^{p,\ift} $. The estimate \eqref{Lpbounds1} now follows from the standard interpolation inequality. 

It remains to show the relative compactness of $ \{\Q_{\va}\}_{\va\in(0,1)} $ in $ W_{\loc}^{1,p} $
Choose $ \vp\in C_0^{\ift}(B_1) $ such that $ \vp\equiv 1 $ in $ B_{\f{1}{2}} $, $ \vp\equiv 0 $ outside $ B_{\f{5}{8}} $. Moreover, we require that $ \|\vp\|_{C^2(B_1)}\leq C_0 $, where $ C_0>0 $ is an absolute constant. Define
$$
r_{\al}^{\Lda'}(\vp\Q_{\va},x):=\sup\{r>0:r^{\al}\|(-\Delta)^{\f{\al}{2}}(\vp\Q_{\va})\|_{L^{\ift}(B_r(x))}\leq\Lda'\}.
$$
Using Lemma \ref{lemaLaplace}, we deduce that for some $ \Lda'=\Lda'(a,b,c,M,\sg,\theta)>0 $,
$$
\left\{y\in B_{\f{3}{4}}:r_{\al}^{\Lda'}(\vp\Q_{\va},y)< \eta'r\right\}\subset\{y\in B_1:r(\Q_{\va},y)<\eta' r\}
$$
for $ r\in(0,\f{1}{100}) $. With the help of \eqref{bound2}, we have 
\be
(-\Delta)^{\f{\al}{2}}(\vp\Q_{\va})\in L^{q,\ift}(B_{\f{3}{4}})\label{condition1}
\ee
for any $ q\in(1,\f{2}{\al}) $. Since $ \supp\vp\subset B_{\f{5}{8}} $, for any $ x\in \R^n\backslash B_{\f{3}{4}} $, we deduce that
$$
(-\Delta)^{\f{\al}{2}}(\vp\Q_{\va})=-C(n,\al)\lim_{\delta\to 0^+}\int_{B_{\f{5}{8}}}\f{(\vp\Q_{\va})(y)}{|x-y|^{n+\al}}\ud y\sim\f{1}{|x|^{n+\al}}\in L^q(\R^n\backslash B_{\f{3}{4}})
$$
for any $ q\in(1,+\ift) $. This, together with \eqref{condition1}, implies that
$$
\|(-\Delta)^{\f{\al}{2}}(\vp\Q_{\va})\|_{L^q(\R^n)}\leq C(a,\al,b,c,M,q)
$$
for any $ q\in(1,\f{2}{\al}) $ with $ \al\in(1,2) $. Then, the results in \cite[Chapter V, Section 3.3]{Ste70}, especially formulas (38) and (40) in that book, imply
\be
\|\Q_{\va}\vp\|_{W^{\al,q}(\R^n)}\leq C(a,\al,b,c,M,q).\label{finaluse1}
\ee
Here, $ W^{\al,q}(U) $ with $ U\subset\R^n $ denotes the fractional Sobolev space with the norm
$$
\|u\|_{W^{\al,q}(U)}:=\|u\|_{W^{1,q}(U)}+\(\int_U\int_U\f{|\na u(x)-\na u(y)|^p}{|x-y|^{q(\al-1)+3}}\ud x\ud y\)^{\f{1}{p}}.
$$
Note that \eqref{finaluse1} yields that
$$
\|\na\Q_{\va}\|_{W^{\f{1}{2},\f{5}{4}}(B_{\f{1}{2}})}\leq C(a,b,c,M).
$$
By the Sobolev embedding theorem (see \cite[Theorem 1.3]{DLSV24} for example), we have 
$$
W^{\f{1}{2},\f{5}{4}}(B_{\f{1}{2}})\hookrightarrow L^q(B_{\f{1}{2}})\text{ for any }1\leq q<\f{30}{19},
$$
where $ \hookrightarrow $ means the inclusion is compact. Then the relative compactness in $ W_{\loc}^{1,p} $ follows from standard interpolations.
\end{proof}

\section{Sharpness of main results}\label{Sharpness}

Under the same assumptions of Theorem \ref{Maintheorem}, as in \cite{Can17}, we define
$$
\mu_{\va}(A)=\f{E_{\va}(\Q_{\va},A)}{\log\f{1}{\va}},\text{ for any }\cL^n\text{-measurable }A\subset\om.
$$
Then, there exists $ \va_i\to 0^+ $ such that $ \mu_{\va_i}\wc^*\mu_0\in\M(\ol{\om}) $. Let $ \cS_{\op{line}}:=\supp\mu_0 $. \cite[Theorem 1]{Can17} implies that there is $ \Q_0\in H_{\loc}^1(\om\backslash\cS_{\op{line}},\cN) $ such that the following properties hold.
\begin{enumerate}[label=$(\op{C}\theenumi)$]
\item $ \Q_{\va_i}\to\Q_0 $ strongly in $ H_{\loc}^1(\om\backslash\cS_{\op{line}},\Ss_0) $.
\item $ \Q_0 $ is a local minimizer of the Dirichlet energy \eqref{Dirichletenergy}.
\item\label{C3} Let $ \cS_{\op{pts}}=\sing(\Q_0) $. Then, $ \cS_{\op{pts}} $ is locally finite in $ \om $ and $ \Q_{\va_i}\to\Q_0 $ strongly in $ C_{\loc}^j(\om\backslash(\cS_{\op{line}}\cup\cS_{\op{pts}})) $ for any $ j\in\Z_+ $.
\end{enumerate}

We further assume that there exists an open set $ U_0\subset\subset\om $ such that $ \mu_0(U_0)>0 $. If not, it is a trivial case, and the problem setting reduces to those in \cite{FWW25}. By the property of weak${}^*$ convergence, we have $
0<\mu_0(U_0)\leq\liminf_{i\to+\ift}\mu_{\va_i}(U_0) $. Combined with \eqref{Bulkenergyestimate1}, it follows that if $ i\in\Z_+ $ is sufficiently large, then
$$
\int_{U_0}|\na\Q_{\va_i}|^2\ud x\geq\f{1}{2}\mu(U_0)\log\f{1}{\va}-C,
$$
implying that \eqref{Lpbounds1} is optimal in this setting.

Given \cite[Proposition 2]{Can17}, $ \cS_{\op{line}}\cap\ol{U}_0 $ is the union of a finite number of closed straight line segments. Then, we choose $ x_0,y_0\in U_0 $ such that $ x_0\neq y_0 $ and the closed segment $ \ol{x_0y_0} $ is contained in $ \cS_{\op{line}}\cap\ol{U}_0 $. Furthermore, we can assume that there are no endpoints of $ \cS_{\op{line}} $ on $ \ol{x_0y_0} $. Up to a translation and a rotation, we assume that $ x_0=(0,0,-h) $ and $ y_0=(0,0,h) $ with some $ h>0 $. For $ s,L>0 $, we define the cylinder $ C_{s,L} $ and its lateral surface by
$$
C_{s,L}:=B_s^2((0,0))\times(-L,L),\quad\Ga_{s,L}:=\pa B_s^2((0,0))\times(-L,L).
$$
Since by \ref{C3}, $ \cS_{\op{pts}} $ is locally finite, there exists $ r>0 $ such that
$$
C_{2r,h}\subset U_0,\quad\ol{C}_{r,\f{h}{2}}\cap\cS_{\op{line}}=\ol{z_-z_+},\quad\ol{\Ga}_{r,\f{h}{2}}\cap\cS_{\op{pts}}=\emptyset,
$$
where $ z_{\pm}:=\(0,0,\pm\f{h}{2}\) $. Note that for any $ t\in[-\f{h}{2},\f{h}{2}] $, the disk $ D_t:=B_r^2((0,0))\times\{t\} $ satisfies
$$
\ol{D}_t\cap\cS_{\op{line}}=(0,0,t).
$$
Using \cite[Proposition 2(i)]{Can17}, $ \Q_0\llcorner\pa D_t $ is homotopically non-trivial in $ \cN $, where $ \pa D_t:=\pa B_r^2((0,0))\times\{t\} $. We now have the following lemma.

\begin{lem}\label{lastlem}
There exists $ \eta=\eta(a,b,c)>0 $ such that if $ i\in\Z_+ $ is sufficiently large then for any $ t\in[-\f{h}{2},\f{h}{2}] $, there is $ y_t\in D_t $ such that $ f(\Q_{\va_i}(y_t))>\eta $.
\end{lem}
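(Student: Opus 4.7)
The plan is to argue by contradiction. Suppose the conclusion fails; one can then extract sequences $i_m \to \ift$, $t_m \in (-\f{h}{2}, \f{h}{2})$, and $\eta_m \searrow 0$ such that
$$
\sup_{y \in B_{\f{r}{2}}^2((0,0))} f(\Q_{\va_{i_m}}(y, t_m)) \leq \eta_m.
$$
After passing to a subsequence, arrange $t_m \to t_* \in [-\f{h}{2}, \f{h}{2}]$. The goal is to extract a topological obstruction from the nontriviality of $\Q_0$ around the line defect axis.

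The first ingredient I will use is the quadratic nondegeneracy of $f$ transverse to its zero set $\cN$: since $\cN$ is a compact smooth submanifold of $\Ss_0$ on which $f$ vanishes with positive definite normal Hessian, there exist $\delta_0, c_0 > 0$, depending only on $a,b,c$, such that on the tubular neighborhood $U_{\delta_0} := \{\Q \in \Ss_0 : \dist(\Q, \cN) < \delta_0\}$ there is a smooth nearest-point retraction $\pi_\cN : U_{\delta_0} \to \cN$, together with $f(\Q) \geq c_0 \dist(\Q, \cN)^2$. Choosing $\eta = \eta(a,b,c) > 0$ small enough to guarantee $\{f < \eta\} \subset U_{\delta_0}$, the contradiction hypothesis forces $\Q_{\va_{i_m}}(\cdot, t_m)$ to take values in $U_{\delta_0}$ on $\ol{B_{\f{r}{2}}^2((0,0))}$ for $m$ large, and composing with $\pi_\cN$ produces a continuous map $u_m := \pi_\cN \circ \Q_{\va_{i_m}}(\cdot, t_m) : \ol{B_{\f{r}{2}}^2((0,0))} \to \cN$.

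The second ingredient is the $C^j_{\loc}$ convergence stated in property (C3). By the selection of $C_{r,h}$ (with $\cS_{\op{pts}} \cap C_{r,h} = \emptyset$ and $\cS_{\op{line}}$ meeting $C_{r,h}$ only along the central axis), the lateral surface $\Sg := \pa B_{\f{r}{2}}^2((0,0)) \times [-\f{h}{2}, \f{h}{2}]$ is a compact subset of $\om \backslash (\cS_{\op{line}} \cup \cS_{\op{pts}})$. Hence $\Q_{\va_{i_m}} \to \Q_0$ uniformly on $\Sg$, which in particular gives $\Q_{\va_{i_m}}(\cdot, t_m) \to \Q_0(\cdot, t_*)$ uniformly on the circle $\pa B_{\f{r}{2}}^2((0,0))$; since $\pi_\cN$ is continuous and restricts to the identity on $\cN$, the boundary traces $u_m|_{\pa B_{\f{r}{2}}^2((0,0))}$ converge uniformly within $\cN$ to $\Q_0(\cdot, t_*)|_{\pa B_{\f{r}{2}}^2((0,0))}$.

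The contradiction is then purely topological. By \cite[Proposition 2(i)]{Can17}, the loop $\Q_0|_{\pa B_{\f{r}{2}}^2((0,0)) \times \{t\}}$ is nontrivial in $\pi_1(\cN) \cong \Z/2$ for each $t \in (-\f{h}{2}, \f{h}{2})$; the same $C^0$-continuity on $\Sg$ shows that the homotopy class is constant on $[-\f{h}{2}, \f{h}{2}]$, which covers the edge case $t_* \in \{\pm \f{h}{2}\}$. Therefore $u_m|_{\pa B_{\f{r}{2}}^2((0,0))}$ is homotopically nontrivial in $\cN$ for $m$ large; but $u_m$ is a continuous extension of this loop to the full disk $\ol{B_{\f{r}{2}}^2((0,0))}$, so this boundary loop must be nullhomotopic --- a contradiction. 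The only delicate point I anticipate is verifying the uniform quadratic nondegeneracy of $f$ near $\cN$ and setting up $\pi_\cN$ on a neighborhood whose size depends only on $a,b,c$, which is a standard property of the Landau--de Gennes bulk potential.
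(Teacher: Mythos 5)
Your proof takes essentially the same approach as the paper's: both construct a nearest-point retraction onto $\cN$ defined on the sublevel set $\{f<\eta\}$, use the $C^0_{\loc}$ convergence from \ref{C3} to identify the boundary trace with the nontrivial loop $\Q_0\llcorner\pa D_t$, and derive the contradiction from the fact that the retracted map extends continuously to the whole disk $D_t$, forcing nullhomotopy. The remaining differences --- your explicit contradiction sequence $\eta_m\searrow 0$ with $t_m\to t_*$, and your appeal to quadratic nondegeneracy of $f$ near $\cN$ in place of the paper's citations of \cite[Lemma 4.2]{FWW25} and \cite[Lemma 12]{Can17} for the $C^1$ projection --- are cosmetic.
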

\begin{proof}
For fixed $ t\in[-\f{h}{2},\f{h}{2}] $, by \ref{C3}, $ \Q_{\va_i}\to\Q_0 $ uniformly in $ \ol{\Ga}_{r,h}\backslash\ol{z_-z_+} $. Applying \cite[Lemma 4.2]{FWW25} and \cite[Lemma 12]{Can17}, we choose $ \eta=\eta(a,b,c)>0 $ such that there is a $ C^1 $ nearest point projection
$$
\Pi:\{\Q\in\Ss_0:f(\Q)\leq\eta\}\to\cN.
$$
As a result, since the homotopy class of $ \Q_0\llcorner\pa D_t $ is non-trivial, there must be some $ y_t\in D_t $ such that $ f(\Q_{\va_i})>\eta $, whenever $ i\in\Z_+ $ is sufficiently large. Indeed, if not, $ \Pi\circ\Q_{\va_i} $ is the homotopy connecting the circle $ \Q_0\llcorner\pa D_t $ to a point in $ \cN $.
\end{proof}

By Lemma \ref{Apriori}, we have $ \|\na\Q_{\va_i}\|_{L^{\ift}(C_{2r,h})}\leq C\va_i^{-1} $. Then, it follows from Lemma \ref{lastlem} that when $ i\in\Z_+ $ is sufficiently large, for any $ t\in[-\f{h}{2},\f{h}{2}] $, there is a ball $ B_{\delta\va_i}(y_t) $ such that $
B_{\delta\va_i}(y_t)\cap(\R^2\times\{t\})\subset B_{2r}^2((0,0))\times\{t\} $ and $ \inf_{B_{\delta\va_i}(y_t)}f(\Q_{\va_i})\geq\f{\eta}{2} $. The Fubini theorem implies that
$$
\int_{C_{2r,h}}f(\Q_{\va_i})\geq\int_{-\f{h}{2}}^{\f{h}{2}}\(\int_{B_{2r}^2((0,0))\times\{t\}}f(\Q_{\va_i})\ud \cL^2\)\ud t\geq\f{\pi}{2}h\eta(\delta\va_i)^2.
$$
By this estimate, we conclude that \eqref{Bulkenergyestimate1} is sharp for the regime in this section.

\section*{Acknowledgment}

The authors would like to thank Prof. Zhifei Zhang for discussions and anonymous referees for their helpful suggestions. This work is partially supported by the National Key R$\&$D Program of China under Grant 2023YFA1008801 and NSF of China under Grant 12288101.

\section*{Declarations} 

\subsection*{Data availability} This article has no associated data.
\subsection*{Conflict of interest} The authors declared that they have no conflict of interest.

\end{document}